\newcommand{\betless}{\noalign{\vskip3pt plus 3pt minus 1pt}}
\newcommand{\bet}{\noalign{\vskip6pt plus 3pt minus 1pt}}
\renewcommand{\O }{\Omega }
\renewcommand{\lor }{\longrightarrow}
\newtheorem{theorem}{Theorem}
\newtheorem{lemma}{Lemma}
\newcounter{remark}
\def\theremark {\arabic{remark}}
\newenvironment{remark}{\refstepcounter{remark}\par\noindent{\bf Remark\ \theremark}\ }{\par}
\newtheorem{Proof}{Proof}
\newenvironment{proof}{\begin{Proof}\rm}{\hfill $\Box$ \end{Proof}}
\title{A posteriori error estimations for mixed finite-element approximations to the Navier-Stokes equations}
\author{Javier de Frutos\thanks{Departamento de Matem\'{a}tica Aplicada,
Universidad de Valladolid. Spain. Research supported by Spanish MEC
under grant MTM2007-60528 (frutos@mac.uva.es)} \and Bosco
Garc\'{\i}a-Archilla\thanks{Departamento de Matem\'{a}tica Aplicada
II, Universidad de Sevilla, Sevilla, Spain. Research supported by
Spanish MEC under grant MTM2009-07849 (bosco@esi.us.es)}
  \and Julia Novo\thanks{Departamento de
Matem\'aticas, Universidad Aut\'onoma de Madrid, Instituto de
Ciencias Matem\'aticas CSIC-UAM-UC3M-UCM, Spain. Research supported
by Spanish MEC under grant MTM2007-60528 (julia.novo@uam.es)}}
\begin{document}
\maketitle

\begin{abstract} A posteriori estimates for mixed finite element discretizations of the Navier-Stokes equations
are derived. We show that the task of estimating the error in the
evolutionary Navier-Stokes equations can be reduced to the
estimation of the error in a steady Stokes problem. As a
consequence, any available procedure to estimate the error in a
Stokes problem can be used to estimate the error in the nonlinear
evolutionary problem. A practical procedure to estimate the error
based on the so-called postprocessed approximation is also
considered. Both the semidiscrete (in space)  and the  fully
discrete cases are analyzed. Some numerical experiments are
provided.
\end{abstract}

\section{Introduction}
\label{sec:1}
We consider
the incompressible Navier--Stokes equations
\begin{eqnarray}
\label{onetwo}
u_t -\Delta u + (u\cdot\nabla)u + \nabla p &=& f,\\
\bet
{\rm div}(u)&=&0,\nonumber
\end{eqnarray}
in a bounded domain $\Omega\subset {\mathbb R}^d$ ($d=2,3$) with a smooth
boundary subject to homogeneous Dirichlet boundary conditions $u=0$
on~$\partial\Omega$. In~(\ref{onetwo}), $u$ is the velocity
field, $p$ the pressure, and~$f$ a given force field. For simplicity in the exposition
 we assume, as in \cite{Bause}, \cite{heyran0}, \cite{heyran2}, \cite{HR-IV}, \cite{kara-makri}, that
the fluid density and viscosity have been normalized by an adequate change
of scale in space and time.

Let  $u_h$ and $p_h$ be the semi-discrete (in space) mixed finite element (MFE) approximations to the velocity~$u$
and pressure~$p$, respectively, solution of~(\ref{onetwo}) corresponding
to a given initial condition
\begin{eqnarray}\label{ic}
u(\cdot,0)=u_0.
\end{eqnarray}
We study the a posteriori error estimation of these approximations in the $L^2$ and $H^1$ norm for the velocity and in the $L^2/{\Bbb R}$ norm for the pressure. To
do this for a given time~${t^*}>0$, we consider the
solution ($\tilde u$, $\tilde p$) of the Stokes problem
\begin{equation}
\begin{array}{rcl}
\left.\begin{array}{r@{}}
-\Delta \tilde u +\nabla \tilde p=f-\frac{d}{dt}u_h({t^*})
-(u_h({t^*})\cdot\nabla) u_h({t^*})
\\
\betless
{\rm div}(\tilde u)=0
\end{array}\right\}&\qquad \hbox{\rm in~$\Omega$},
\\
\bet
&\hspace*{-41pt}\tilde u=0, \quad \hbox{~~~~~~~~~~\,\rm on~$\partial\Omega$}.
\end{array}
\label{eq:stokes}
\end{equation}
We prove that $\tilde u$ and $\tilde p$ are approximations to~$u$ and~$p$ whose errors decay by a factor of~$h\left|\log(h)\right|$ faster than those of $u_h$ and~$p_h$ ($h$ being the mesh size).
 As a consequence,
the quantities $\tilde u-u_h$ and~$\tilde p-p_h$, are asymptotically
exact indicators of the errors
$u-u_h$ and $p-p_h$ in the Navier-Stokes
problem~(\ref{onetwo})--(\ref{ic}).

Furthermore, the key observation in the present paper is that
($u_h,p_h$) is also the MFE approximation to the solution $(\tilde
u,\tilde p)$ of the Stokes problem (\ref{eq:stokes}). Consequently,
any available procedure to a posteriori estimate the errors in a
Stokes problem can be used to estimate the errors $\tilde u -u_h$
and~$\tilde p-p_h$ which, as mentioned above, coincide
asymptotically with the errors~$u-u_h$ and $p-p_h$ in the
evolutionary NS equations. Many references address the question of
estimating the error in a Stokes problem, see for example
\cite{Ains-oden}, \cite{Bank-Welfert1},  \cite{Bank-Welfert2},
\cite{Jin}, \cite{Kweon}, \cite{Russo}, \cite{Verfurth1} and the
references therein. In this paper we prove that any efficient or
asymptotically exact estimator of the error in the MFE approximation
$(u_h,p_h)$ to the solution of the {\it steady\/} Stokes problem
(\ref{eq:stokes}) is also an efficient or asymptotically exact
estimator, respectively, of the error in the MFE approximation
$(u_h,p_h)$ to the solution of~the {\it evolutionary\/}
Navier-Stokes equations (\ref{onetwo})--(\ref{ic}).

For the analysis in the present paper we do not assume to have
more than second-order spatial derivatives bounded in $L^2(\Omega)^d$ up to initial time $t=0$, since demanding further regularity requires the data to satisfy nonlocal
compatibility conditions unlikely to be fulfilled in practical situations
\cite{heyran0}, \cite{heyran2}. The analysis of the errors $u-\tilde u$ and
$p-\tilde p$ follows closely \cite{jbj_regularity} where MFE approximations
to the Stokes problem (\ref{eq:stokes}) (the so-called postprocessed approximations) are considered with the
aim of getting improved approximations to the solution of (\ref{onetwo})--(\ref{ic}) at any fixed time
$t^*>0$. In this paper we will also refer to ($\tilde u$, $\tilde p$) as postprocessed approximations although they are of course not
computable in practice and they are only considered for the analysis of a posteriori
error estimators. The postprocessed approximations to the Navier-Stokes equations were
first developed for spectral methods  in \cite{Bosco-Julia-Titi}, \cite{Bosco-Julia-Titi-2}, \cite{novo3}, \cite{margolin-titi-wynne} and
also developed for MFE methods for the Navier-Stokes equations  in \cite{bjj}, \cite{bbj}, \cite{jbj_regularity}.

For the sake of completeness, in the present paper we also analyze the
use of the computable postprocessed approximations of~\cite{jbj_regularity}
for a posteriori error estimation.
The use of this kind of postprocessing technique to get a posteriori
error estimations has been studied in  \cite{Javier-Julia-estiman},
\cite{Javier-Julia-estiman-2} and \cite{Javier-Julia-estiman-3} for
nonlinear parabolic equations excluding the Navier-Stokes equations.
We refer also to \cite{kara-makri} where the so-called Stokes
reconstruction is used to a posteriori estimate the errors of the
semi-discrete in space approximations to a linear time-dependent
Stokes problem. We remark that the Stokes reconstruction of
\cite{kara-makri} is exactly the postprocessing approximation
($\tilde u,\tilde p$) in the particular case of a linear model.

In the second part of the paper we consider a posteriori error
estimations for the fully discrete MFE approximations $U_h^n\approx
u_h(t_n)$ and $P_h^n\approx p_h(t_n)$,  ($t_n=t_{n-1}\Delta t_{n-1}$ for
$n=1,2,\ldots,N$) obtained by integrating in time with
either the backward Euler method or the two-step backward
differentiation formula (BDF).  For this purpose, we define  a
Stokes problem similar to (\ref{eq:stokes}) but with the
right-hand-side depending now on the fully discrete MFE
approximation~$U_h^n$ (problem~(\ref{posth0n})--(\ref{posth1n}) in
Section~\ref{sec:4} below). We will call time-discrete postprocessed
approximation to the solution $(\widetilde U^n, \widetilde P^n)$ of
this new Stokes problem. As before, $(\widetilde U^n, \widetilde
P^n)$ is not computable in practice and it is only considered for
the analysis of a posteriori error estimation.

Observe that in the fully discrete case (which is the case in actual
computations) the task of estimating the the error~$u(t_n)-U_h^n$ of
the MFE approximation becomes more difficult due to the presence of
time discretization errors $e_h^n=u_h(t_n)-U_h^n$, which are added
to the spatial discretization errors $u(t_n)-u_h(t_n)$. However we
show in Section~\ref{sec:4} that if temporal and spatial errors are
not very different in size, the quantity $\widetilde U^n-U_h^n$
correctly esimates the spatial error because the leading terms of
the temporal errors in
 $\widetilde U^n$ and~$U_h^n$ get canceled out when subtracting
$\widetilde U^n-U_h^n$, leaving only the spatial component of the error.
This is a very convenient property that allows to use independent
procedures for the tasks of estimating the errors of the spatial and
temporal discretizations. We remark that the temporal error can be
routinely controlled by resorting to well-known ordinary
differential equations techniques. Analogous results were obtained
in \cite{Javier-Julia-estiman-3} for fully discrete finite element
approximations to evolutionary convection-reaction-diffusion
equations using the backward Euler method.

As in the semidiscrete case, a key point in our results is again the
fact that the fully discrete MFE approximation $(U_h^n, P_h^n)$  to
the Navier-Stokes problem~(\ref{onetwo})--(\ref{ic}) is also the MFE
approximation to the solution~$(\widetilde U^n,\widetilde P^n)$ of
the Stokes problem~~(\ref{posth0n})--(\ref{posth1n}). As a
consequence, we can use again any available error estimator for the
Stokes problem to estimate the spatial error of the fully discrete
MFE approximations $(U_h^n, P_h^n)$ to the Navier-Stokes
problem~(\ref{onetwo})--(\ref{ic}).

Computable mixed finite element approximations
to~$(\widetilde U^n,\widetilde P^n)$, the
so-called fully discrete postprocessed approximations, were studied and analyzed in \cite{jbj_fully} where we proved
that the fully discrete postprocessed approximations maintain the increased spatial accuracy  of the semi-discrete approximations. The analysis in the second
part of the present paper borrows in part from~$\cite{jbj_fully}$.
Also, we propose a computable error estimator based on the fully discrete
postprocessed approximation of \cite{jbj_fully} and show that it also has the excellent property of separating spatial and temporal errors.

The rest of the paper is as follows. In Section 2 we introduce some preliminaries and notation.
In Section 3 we study the
a posteriori error estimation of semi-discrete in space MFE approximations. In Section 4 we study a
posteriori error estimates
for fully discrete approximations. Finally, some numerical experiments are shown in Section 5.
\section{Preliminaries and notations}
\label{sec:2}
We will assume that
$\Omega$ is a bounded domain in~${\mathbb{R}}^{d},\, d=2,3$,
of class~${\cal C}^m$, for $m\ge 2$. When dealing with linear elements
($r=2$ below) $\Omega$ may also be a convex polygonal or polyhedral domain.
We consider
the Hilbert spaces
\begin{align*}
 H&=\left\{ u \in L^{2}(\O)^d \mid\mbox{div}(u)=0, \, u\cdot n_{|_{\partial \Omega}}=0 \right\},\\
V&=\left\{ u \in H^{1}_{0}(\O)^d \mid \mbox{div}(u)=0 \right\},
\end{align*}
 endowed
with the inner product of $L^{2}(\O)^{d}$ and $H^{1}_{0}(\O)^{d}$,
respectively. For  $l\ge 0$ integer and $1\le q\le \infty$, we consider the
standard  spaces, $W^{l,q}(\Omega)^d$, of functions with
derivatives up to order $l$ in $L^q(\Omega)$, and
$H^l(\Omega)^d=W^{l,2}(\Omega)^d$. We will denote by $\|\cdot \|_l$ the norm in $H^l(\Omega)^d$,
and~$\|\cdot\|_{-l}$ will represent
the norm of its dual space. We consider also the quotient spaces
$H^l(\Omega)/{\mathbb R}$ with norm $\| p\|_{H^l/{\mathbb R}}= \inf\{ \| p+c\|_l\mid  c\in {\mathbb R}\}$.

We recall the following Sobolev's imbeddings \cite{Adams}: For
$q \in [1, \infty)$, there exists a constant $C=C(\Omega, q)$ such
that
\begin{equation}\label{sob1}
\|v\|_{L^{q'}} \le C \| v\|_{W^{s,q}}, \,\,\quad
\frac{1}{q'}
\ge \frac{1}{q}-\frac{s}{d}>0,\quad q<\infty, \quad v \in
W^{s,q}(\Omega)^{d}.
\end{equation}
For $q'=\infty$, (\ref{sob1}) holds with $\frac{1}{q}<\frac{s}{d}$.

The following inf-sup condition is satisfied (see \cite{girrav}), there exists a constant $\beta>0$
such that
\begin{equation}\label{inf-sup}
\inf_{q\in L^2(\Omega)/{\Bbb R}}\sup_{ v\in H_0^1(\Omega)^d}\frac{(q,\nabla\cdot v)}{\|v\|_1\|q\|_{L^2/{\Bbb R}}}\ge \beta,
\end{equation}
where, here and in the sequel, $(\cdot,\cdot)$ denotes the standard inner product
in $L^2(\Omega)$ or in $L^2(\Omega)^d$.

Let $\Pi: L^2(\O)^d \lor H$ be the $L^2(\O)^d $ projector onto $H$.
We denote by $A$ the Stokes operator on $\O$: $$ A:
\mathcal{D}(A)\subset H \lor H, \quad \, A=-\Pi\Delta , \quad
\mathcal{D}(A)=H^{2}(\O)^{d} \cap V. $$ Applying Leray's projector
$\Pi$ to (\ref{onetwo}), the equations can be written in the form
\begin{eqnarray*}
u_t +     A u +B(u,u) =\Pi f \quad \mbox{ in } \O,
\end{eqnarray*}
where $B(u,v)=\Pi  (u\cdot \nabla) v$ for $u$, $v$ in $H_0^1(\Omega)^d$.

We shall use the trilinear form $b(\cdot,\cdot,\cdot)$ defined by
$$
\displaylines{
b(u,v,w)=(F(u,v),w)\quad\forall u,v,w\in H_0^1(\Omega)^d,
\cr
\hbox{\rm
where}
\hfill\cr
F(u,v)=(u\cdot \nabla) v +\frac{1}{2}(\nabla\cdot u)v\quad\forall
u,v\in H_0^1(\Omega)^d.
}
$$
It is straightforward to verify that $b$ enjoys skew-symmetry:
\begin{equation}\label{skew}
b(u,v,w)=-b(u,w,v) \quad \forall u,v,w\in H_0^1(\Omega)^d.
\end{equation}
Let us observe that $B(u,v)=\Pi F(u,v)$ for $u\in V,$ $v\in H_0^1(\Omega)^d$.

Let us consider for $\alpha\in {\mathbb R}$ and $t>0$ the operators
$A^{\alpha}$ and~$e^{-tA}$, which are defined
by means of the spectral properties of~$A$
(see, e.g., \cite[p.~33]{Constantin-Foias}, \cite{Fujita-Kato}). Notice that
$A$ is a positive self-adjoint operator with compact resolvent in~$H$.
An easy calculation shows that
\begin{equation}\label{lee1}
\|A^{\alpha}e^{-tA}\|_{0}\leq
(\alpha e^{-1})^\alpha
t^{-\alpha}, \qquad \alpha\ge 0,\ t>0,
\end{equation}
where, here and in what follows, $\left\|\cdot\right\|_0$ when applied to an
operator denotes the associated operator norm.

We shall assume that the solution $u$ of (\ref{onetwo})-(\ref{ic})
satisfies
\begin{equation}
\label{eq:M_1}
\|u(t)\|_1\le M_1,\quad
\|u(t)\|_2\le M_2,\quad 0\le t\le T,
\end{equation}
for some constants $M_1$ and~$M_2$. We shall also assume that there exists
a constant $\tilde M_2$ such that
\begin{equation}
\|f\|_1+\|f_t\|_1
+\|f_{tt}\|_1\le \tilde M_2,\quad 0\le t\le T.
\label{tildeM2}
\end{equation}
Finally, we shall assume that for some $k\ge 2$
$$
\sup_{0\le t\le T}
\bigl\| \partial_t^{\lfloor k/2\rfloor} f\bigr\|_{k-1-2{\lfloor k/2\rfloor}}+
\sum_{j=0}^{\lfloor (k-2)/2\rfloor}\sup_{0\le t\le T}
\bigl\| \partial_t^j f\bigr\|_{k-2j-2}
<+\infty,
$$
so that, according to Theorems~2.4 and~2.5 in~\cite{heyran0},
there exist positive constants $M_k$ and $K_{k}$ such that the
following bounds hold:
\begin{eqnarray}
\| u(t)\|_{k} + \| u_t(t)\|_{k-2}
+\| p(t)\|_{H^{k-1}/{\mathbb R}} \le M_k \tau(t)^{1-k/2},
\qquad\qquad\quad
\label{eq:u-inf}
\\
\quad
\int_0^t
\sigma_{k-3}(s)
\bigl(\| u(s)\|_{k}^2 + \| u_s(s)\|_{k-2}^2
+\| p(s)\|_{H^{k-1}/{\mathbb R}}^2+
\| p_s(s)\|_{H^{k-3}/{\mathbb R}} ^2\bigl)\,{\rm d}s\le
K_{k}^2,
\label{eq:u-int}
\end{eqnarray}
where $\tau(t)=\min(t,1)$ and~$\sigma_n=e^{-\alpha(t-s)}\tau^n(s)$ for some
$\alpha>0$. Observe that for $t\le T<\infty$, we can take~$\tau(t)=t$ and
$\sigma_n(s)=s^n$. For simplicity, we will take these values of
$\tau$ and~$\sigma_n$.

Let $\mathcal{T}_{h}=(\tau_i^h,\phi_{i}^{h})_{i \in
I_{h}}$, $h>0$ be a family of partitions of suitable domains $\Omega_h$, where
$h$ is the maximum diameter of the elements $\tau_i^h\in \mathcal{T}_{h}$,
and $\phi_i^h$ are the mappings of the reference simplex
$\tau_0$ onto $\tau_i^h$.

Let $r \geq 2$, we consider the finite-element spaces
$$
S_{h,r}=\left\{ \chi_{h} \in \mathcal{C}\left(\overline{\O}_{h}\right) \,  |
\, {\chi_{h}}{|_{\tau_{i}^{h}}}
\circ \phi^{h}_{i} \, \in \, P^{r-1}(\tau_{0})  \right\} \subset H^{1}(\O_{h}),
\
{S}_{h,r}^0= S_{h,r}\cap H^{1}_{0}(\O_{h}),
$$
where $P^{r-1}(\tau_{0})$ denotes the space of polynomials
of degree at most $r-1$ on $\tau_{0}$. As it is customary in the analysis
of finite-element methods for the Navier-Stokes equations
(see e.~g., \cite{Bause}, \cite{heyran0},
\cite{heyran2}, \cite{HR-IV}, \cite{Hill-Suli})  we restrict ourselves to quasiuniform and regular meshes $\mathcal{T}_{h}$, so that as a consequence
of   \cite[Theorem 3.2.6]{ciar0}, the
following inverse inequality holds
for each $v_{h} \in ({S}_{h,r}^0)^{d}$
\begin{eqnarray}
\label{inv}
\| v_{h} \|_{W^{m,q}(\Omega_h)^{d}} \leq C h^{l-m-d\left(\frac{1}{q'}-\frac{1}{q}\right)}
\|v_{h}\|_{W^{l,q'}(\Omega_h)^{d}},
\end{eqnarray}
where $0\leq l \leq m \leq 1$, $1\leq q' \leq q \leq \infty$.

We shall denote by $(X_{h,r}, Q_{h,r-1})$
the so-called Hood--Taylor element \cite{BF,hood0}, when $r\ge 3$, where
$$
X_{h,r}=\left({S}_{h,r}^0\right)^{d},\quad
Q_{h,r-1}=S_{h,r-1}\cap L^2(\O_{h})/{\mathbb R},\quad r
\ge 3,
$$
and the so-called mini-element~\cite{Brezzi-Fortin91} when $r=2$,
where $Q_{h,1}=S_{h,2}\cap L^2(\O_{h})/{\mathbb R}$, and
$X_{h,2}=({S}_{h,2}^0)^{d}\oplus{\mathbb B}_h$. Here,
${\mathbb B}_h$ is spanned by the bubble functions $b_\tau$,
$\tau\in\mathcal{T}_h$, defined by
$b_\tau(x)=(d+1)^{d+1}\lambda_1(x)\cdots
 \lambda_{d+1}(x)$,  if~$x\in \tau$ and 0 elsewhere,
where  $\lambda_1(x),\ldots,\lambda_{d+1}(x)$ denote the
 barycentric coordinates of~$x$. For these elements a uniform inf-sup condition is satisfied
(see \cite{BF}), that is,
there exists a constant $\beta>0$ independent of the mesh grid size $h$ such that
\begin{equation}\label{lbbh}
 \inf_{q_{h}\in Q_{h,r-1}}\sup_{v_{h}\in X_{h,r}}
\frac{(q_{h},\nabla \cdot v_{h})}{\|v_{h}\|_{1}
\|q_{h}\|_{L^2/{\mathbb R}}} \geq \beta.
\end{equation}
We remark that our analysis can also be applied to other pairs of LBB-stable mixed finite elements (see \cite[Remark 2.1]{jbj_regularity}).

The approximate velocity belongs to the discrete
divergence-free space
$$
V_{h,r}=X_{h,r}\cap \left\{ \chi_{h} \in H^{1}_{0}(\O_{h})^d \mid
(q_{h}, \nabla\cdot\chi_{h}) =0  \quad\forall q_{h} \in Q_{h,r-1}
\right\},
$$
which is not a subspace of $V$. We shall frequently write~$V_h$ instead
of~$V_{h,r}$ whenever the value of~$r$ plays no particular role.

Let $\Pi_{h}:L^{2}(\O )^{d}\lor V_{h,r}$ be the discrete Leray's
projection defined by
$$(\Pi_{h}u,\chi_{h} )=(u,\chi_{h})\quad \forall\chi_{h}\in V_{h,r}.$$
We will use the following
well-known bounds
\begin{equation}
\|(I-\Pi_{h})u\|_j \le Ch^{l-j}\|u\|_l,\quad 1\le l\le 2,\quad j=0,1. \label{eq:error-Pi_h}
\end{equation}
We will denote by $A_h:V_{h}\rightarrow V_{h}$ the discrete Stokes operator defined by
$$
(\nabla v_h,\nabla\phi_h)=(A_h v_h,\phi_h)=\left(A_h^{1/2}v_h,
A_h^{1/2}\phi_h\right) \quad\forall v_h,\phi_h\in V_{h}.
$$
Let
$(u,p)\in  (H^2(\Omega)^d\cap V )\times ( H^1(\Omega)\slash \mathbb{R} )$
be the solution of a Stokes problem with right-hand side $g$,
we will denote by $s_h=S_{h}(u)\in V_{h}$ the so-called Stokes projection
(see \cite{heyran2}) defined as the velocity component of solution
of the following Stokes problem:
find $(s_h,q_h)\in(X_{h,r},Q_{h,r-1})$ such that
\begin{align}
   (\nabla s_h,\nabla\phi_h)+(\nabla q_h,\phi_h)&=(g,\phi_h)
&&\forall \phi_h\in X_{h,r},&&&&
\label{stokesnew}\\
(\nabla \cdot s_h,\psi_h)&=0&& \forall \psi_h\in  Q_{h,r-1}.&&&&
\label{stokesnew2}
\end{align}
The following bound holds for $2\le l\le r$:
\begin{equation}
\|u-s_h\|_0+h\|u-s_h\|_1\le C h^l \bigl (\|u\|_l+\|p\|_{H^{l-1}/{\mathbb R}}\bigr ).
\label{stokespro}
\end{equation}
The proof of (\ref{stokespro}) for $\Omega=\Omega_h$ can be found
in~\cite{heyran2}. For the general case, $\Omega_h$ must be such that
the value of~$\delta(h)=\max_{x\in \partial\Omega_h}\hbox{\rm dist}
(x,\partial\Omega)$ satisfies
$
\delta(h)=O(h^{2(r-1)})$.
This can be achieved if, for example, $\partial \Omega$ is piecewise
of class~${\cal C}^{2(r-1)}$, and super\-para\-metric approximation
at the boundary is used \cite{bb}. Under the same conditions, the
bound for the pressure is \cite{girrav}
\begin{equation}\label{stokespre}
\|p-q_h\|_{L^2/{\mathbb R}}\le
C_\beta h^{l-1}\bigl (\|u\|_l+\|p\|_{H^{l-1}/{\mathbb R}}\bigr ),
\end{equation}
where the constant $C_\beta$ depends on the constant $\beta$ in the inf-sup condition
(\ref{lbbh}).
We will assume that the domain~$\Omega$ is of class ${\cal C}^m$, with $m\ge r$ so that
standard bounds for the Stokes problem \cite{bb}, \cite{Galdi} imply that

\begin{equation}
\label{eq:may06-nueva}
\bigl\| A^{-1} \Pi g\bigr\|_{2+j} \le \left\| g\right\|_j,\qquad
-1\le j\le m-2.
\end{equation}
For a domain~$\Omega$ of class ${\cal C}^2$ we also have the bound (see \cite{cata})
\begin{equation}\label{preHR}
\|p\|_{H^1/{\Bbb R}}\le c \|g\|_0.
\end{equation}
%
%
In what follows we will apply the above estimates to
the particular case in which $(u,p)$ is the solution of the
Navier--Stokes problem (\ref{onetwo})--(\ref{ic}). In that case $s_h=S_h(u)$ is the discrete
velocity in problem (\ref{stokesnew})--(\ref{stokesnew2}) with $g=f-u_t-(u\cdot \nabla u)$.
Note that the temporal variable $t$ appears here merely as a parameter, and then,
taking the time derivative, the error bound
(\ref{stokespro})
can also be applied to the time derivative of $s_h$ changing
$u$, $p$ by $u_t$, $p_t$.

Since we are assuming that~$\Omega$ is of class~${\cal C}^m$ and $m\ge 2$,
from (\ref{stokespro}) and standard bounds for the Stokes
problem~\cite{bb,Galdi},
we deduce that
\begin{equation}
\left\|\left(A^{-1}\Pi -A_h^{-1}\Pi_h\right) f\right\|_j\le  Ch^{2-j}\|f\|_0\quad
\forall f\in L^2(\Omega)^d,
\quad j=0,1.
\label{stokespro+1}
\end{equation}
We consider the semi-discrete finite-element approximation~$(u_h,p_h)$ to~$(u,p)$,
solution
of~(\ref{onetwo})--(\ref{ic}).
That is,
given $u_h(0)=\Pi_hu_{0}$, we compute
$u_{h}(t)\in X_{h,r}$ and $p_{h}(t)\in Q_{h,r-1}$, $t\in(0,T]$,   satisfying
\begin{align}\label{ten}
(\dot u_{h}, \phi_{h})
+     ( \nabla u_{h}, \nabla \phi_{h}) + b(u_{h}, u_{h}, \phi_{h}) +
( \nabla p_{h}, \phi_{h}) & =  (f, \phi_{h})&& \forall \, \phi_{h} \in X_{h,r},\\
(\nabla \cdot u_{h}, \psi_{h}) & = 0&&\forall \, \psi_{h} \in Q_{h,r-1}.
\label{ten2}
\end{align}

For $2\le r\le 5$, provided that~(\ref{stokespro})--(\ref{stokespre}) hold for
$l\le r$,
and~(\ref{eq:u-inf})--(\ref{eq:u-int}) hold for $k=r$, then we have
\begin{equation}
\label{eq:err_vel(t)}
\| u(t)-u_h(t)\|_0+h\| u(t)-u_h(t)\|_1
\le C\frac{h^r}{t^{(r-2)/2}},
\quad 0\le t\le T,
\end{equation}
(see, e.g.,
\cite{jbj_regularity,heyran0,heyran2}),
and also,
\begin{equation}
\label{eq:err_pre(t)}
\| p(t)-p_h(t)\|_{L^2/{\mathbb R}} \le C\frac{h^{r-1}}{t^{(r'-2)/2}},
\quad 0\le t\le T,
\end{equation}
where $r'=r$ if $r\le 4$ and $r'=r+1$ if~$r=5$.

see \cite[Proposition~3.2]{HR-IV}.
\section{A posteriori error estimations. Semidiscrete case}
\label{sec:3} Let us consider the MFE approximation $(u_h,p_h)$ at
any time ${t^*}\in(0,T]$ to $(u({t^*}),p({t^*}))$ obtained by
solving (\ref{ten})--(\ref{ten2}). We consider the postprocessed
approximation $(\tilde u(t^*),\tilde p(t^*))$ in
$(V,L^2(\Omega)/{\Bbb R})$ which is the solution of the following
Stokes problem written in weak form
\begin{eqnarray}\label{posth0}\qquad
 \big(\nabla\tilde u(t^*),
 \nabla {\phi}\big) \,{+}\,\big(\nabla \tilde p(t^*),
 {\phi} \big)
&=& (f, {\phi}) \,{-}\, b(u_{h}({t^*}),u_{h}({t^*}),{\phi}) \,{-}\,
 ( \dot u_{h}({t^*}), {\phi}),\quad
 \\
\bet \hspace*{15pt}\big( \nabla \cdot \tilde u(t^*), {\psi} \big) &=
& 0, \label{posth1}
\end{eqnarray}
for all ${\phi} \in H_0^1(\Omega)^d$ and ${\psi} \in
L^2(\Omega)/{\Bbb R}$. We remark that the MFE approximation
$(u_h({t^*}),p_h({t^*}))$ to $(u({t^*}),p({t^*}))$ is also the MFE
approximation to the solution $(\tilde u(t^*),\tilde p(t^*))$ of
 the Stokes
problem (\ref{posth0})--(\ref{posth1}). In Theorems~\ref{semi_disve}
and~\ref{semi_dispre} below we prove that the postprocessed
approximation $(\tilde u(t^*),\tilde p(t^*))$ is an improved
approximation to the solution $(u,p)$ of the evolutionary
Navier-Stokes equations (\ref{onetwo})--(\ref{ic}) at time ${t^*}$.
Although, as it is obvious, $(\tilde u(t^*),\tilde p(t^*))$ is not
computable in practice, it is however a useful tool to provide a
posteriori error estimates for the MFE approximation $(u_h,p_h)$ at
any desired time ${t^*}>0$. In Theorem~\ref{semi_disve} we obtain
the error bounds for the velocity  and in Theorem~\ref{semi_dispre}
the bounds for the pressure. The improvement is achieved in the
$H^1(\Omega)^d$ norm when using the mini-element ($r=2$) and in both
the $L^2(\Omega)^d$ and $H^1(\Omega)^d$ norms in the cases $r=3,4$.

In the sequel we will use that for a forcing term satisfying
(\ref{tildeM2}) there exists a constant $\tilde M_3>0$, depending
only on~$\tilde M_2$, $\| A_hu_h(0)\|_0$ and $\sup_{0\le t\le T} \|
u_h(t)\|_1$, such that the following bound hold for $0\le t\le T$:
\begin{equation}\label{A_h_uh}
\| A_h u_h(t)\|_0^2\le \tilde{M}_3^2,
\end{equation}
The following inequalities hold for all $v_h,w_h\in V_{h}$
and~$\phi\in H^1_0(\Omega)^d$, see~\cite[(3.7)]{HR-IV}:
\begin{eqnarray}
\label{eq:adelanto0} |b(v_h,v_h,\phi)|& \le &
c\|v_h\|_1^{3/2}\|A_hv_h\|_0^{1/2} \|\phi\|_0,
\\
\label{eq:adelanto1} |b(v_h,w_h,\phi)|+|b(w_h,v_h,\phi)| &\le&
c\|v_h\|_1 \|A_hw_h\|_0 \|\phi\|_0.
\end{eqnarray}

The proof of Theorem~\ref{semi_disve} requires some previous results
which we now state and prove.

We will use the fact that $\bigl\| A_h^{1/2} w_h\bigr\|_0= \left\|
\nabla w_h\right\|_0$ for $w_h\in V_h$, from where it follows that
\begin{equation}
\label{A-1equiv}
C^{-1} \bigl\| A_h^{-1/2} w_h\bigr\|_0 \le  \left\| w_h\right\|_{-1}
\le  C\bigl\| A_h^{-1/2} w_h\bigr\|_0 \quad \forall w_h \in V_h,
\end{equation}
where the constant $C$ is independent of~$h$.



\begin{lemma} Let $(u,p)$ be the solution of (\ref{onetwo})--(\ref{ic}) and
fix $\alpha>0$. Then there exists a positive constant $C=C(M_2,\alpha)$ such
that for $w_h^1,w_h^2\in V_h$
satisfying the threshold condition
\begin{equation}\label{threshold}
\|w^l_h - u\|_j \le \alpha h^{3/2-j},\quad j=0,1,\quad l=1,2,
\end{equation}
the following inequalities hold for $j=0,1$:
\begin{eqnarray}
\label{4.21-4.22}
\,\,\,\bigl\| A_h^{-j/2}\Pi_h (F(w_h^1,w_h^1)-F(w_h^2,w_h^2))\bigr\|_0&\le &C
\bigl\| A_h^{(1-j)/2}(w_h^1-w_h^2)\bigr\|_0,\\
\label{4.21-4.22b}
\bigl\| A_h^{-j/2}\Pi_h (F(w_h^1,w_h^1)-F(u,u))\bigr\|_0&\le& C
\bigl\| w_h^1-u\bigr\|_{1-j}.
\end{eqnarray}
\end{lemma}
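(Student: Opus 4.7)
The plan is first to extract uniform discrete $L^\infty$ and $W^{1,3}$ bounds on $w_h^1,w_h^2$ from the threshold condition, and then to prove each case $j=0,1$ of the two inequalities by matching Hölder estimates (for $j=0$) with a duality-plus-skew-symmetry argument (for $j=1$). The auxiliary claim is that under the threshold hypothesis,
$$
\|w_h^l\|_{L^\infty(\Omega)} + \|\nabla w_h^l\|_{L^3(\Omega)} \le C(M_2,\alpha),\quad l=1,2.
$$
To prove it, fix a stable (Scott--Zhang type) interpolant $J_h u \in X_{h,r}$ satisfying $\|u-J_h u\|_j\le Ch^{2-j}\|u\|_2$ for $j=0,1$ together with $\|J_h u\|_{L^\infty}+\|\nabla J_h u\|_{L^3}\le C\|u\|_2$. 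The triangle inequality and the threshold give $\|w_h^l-J_h u\|_j\le Ch^{3/2-j}$; the inverse inequality (\ref{inv}) applied to $w_h^l-J_h u$ then yields $\|w_h^l-J_h u\|_{L^\infty}\le Ch^{3/2-d/2}$ and $\|\nabla(w_h^l-J_h u)\|_{L^3}\le Ch^{1/2-d/6}$, both bounded for $d\le 3$. Combined with the Sobolev embeddings $H^2\hookrightarrow L^\infty\cap W^{1,3}$, which give $\|u\|_{L^\infty}+\|u\|_{W^{1,3}}\le CM_2$, the claim follows.

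For (\ref{4.21-4.22}) with $j=0$ I would decompose $F(w_h^1,w_h^1)-F(w_h^2,w_h^2)=F(e,w_h^1)+F(w_h^2,e)$ with $e=w_h^1-w_h^2$ and apply Hölder with the pairings $L^6\times L^3$ on the convective parts and $L^2\times L^\infty$ on the divergence-correction parts; combined with Sobolev $H^1\hookrightarrow L^6$ and the auxiliary bound, this yields $\|F(e,w_h^1)+F(w_h^2,e)\|_0\le C\|e\|_1\le C\|A_h^{1/2}e\|_0$. For $j=1$ I would use the duality characterization
$\|A_h^{-1/2}\Pi_h G\|_0=\sup\{|(G,\psi_h)|:\psi_h\in V_h,\,\|\nabla\psi_h\|_0=1\}$, valid because $\|A_h^{1/2}\psi_h\|_0=\|\nabla\psi_h\|_0$ on $V_h$. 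With $G=F(w_h^1,w_h^1)-F(w_h^2,w_h^2)$, skew-symmetry (\ref{skew}) gives $(G,\psi_h)=-b(e,\psi_h,w_h^1)-b(w_h^2,\psi_h,e)$, which transfers the derivative from $e$ and $w_h^l$ onto $\psi_h$. The convective pieces are immediately controlled by $\|e\|_0\|w_h^l\|_{L^\infty}\|\nabla\psi_h\|_0$; the divergence-correction pieces require a secondary integration by parts, $\int(\nabla\cdot e)(\psi_h\cdot w_h^1)=-\int e\cdot\nabla(\psi_h\cdot w_h^1)$, after which the product rule produces two pieces controlled in $L^2$ by $\|\nabla\psi_h\|_0\|w_h^l\|_{L^\infty}$ and by $\|\nabla w_h^l\|_{L^3}\|\psi_h\|_{L^6}\le C\|\nabla\psi_h\|_0$ respectively.

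For (\ref{4.21-4.22b}) the argument is parallel, with $u$ playing the role of $w_h^2$ and $\epsilon=w_h^1-u$; the decisive simplification is that $\nabla\cdot u=0$ exactly, so every divergence-correction term featuring $u$ drops out, and only $\|u\|_{L^\infty}+\|\nabla u\|_{L^3}\le CM_2$ (again from $H^2$ embeddings) is needed. The main obstacle throughout is the $j=1$ case: bounding the nonlinearity by the plain $L^2$-norm of $e$ leaves no room to absorb a derivative of $w_h^l$, and although skew-symmetry moves the derivative onto $\psi_h$, the divergence-correction half of $F$ forces the extra integration by parts whose closure hinges on the uniform $L^\infty$ and $W^{1,3}$-control of $w_h^l$ from Step~1. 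These bounds exactly saturate the inverse inequality in $d=3$, which is precisely why the threshold exponent $3/2-j$ appears in the hypothesis.
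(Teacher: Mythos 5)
Your proposal is correct and follows essentially the same route as the paper: the same decomposition of the difference of nonlinear terms, the same derivation of uniform $L^\infty$ and $W^{1,q}$ bounds on $w_h^1,w_h^2$ from the threshold condition via inverse inequalities against a projection of $u$ (you use a Scott--Zhang interpolant where the paper uses the Stokes projection $s_h$, and $L^6\times L^3$ pairings where the paper uses $L^{2d}\times L^{2d/(d-1)}$ --- immaterial differences), and the same H\"older estimates for $j=0$. The only substantive addition is that you work out the $j=1$ case in full (duality over $\psi_h\in V_h$, skew-symmetry, and the secondary integration by parts on the divergence-correction term), whereas the paper simply defers that case to Lemma~3.1 of \cite{bbj}.
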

\begin{proof}
Due to the equivalence~(\ref{A-1equiv}) and
and since $\|\Pi_h f\|_0\le \|f\|_0$ for $f\in L^2(\Omega)^d$ it is sufficient to prove
\begin{equation}\label{eq:aux_bat}
\| F(w_h^1,w_h^1) -F(w,w)\|_{-j} \le
  C  \|w_h^1-w\|_{1-j},\quad j=0,1,
\end{equation}
for $w=w_h^2$ or $w=u$.
We follow the proof \cite[Lemma~3.1]{bbj} where a different threshold assumption is assumed.
We do this for $w=w_h^2$, since the
case $w=u$ is similar but yet simpler. We write
\begin{equation}\label{eq:aux0}
F(w_h^1,w_h^1)-F(w_h^2,w_h^2)  =  F(w_{h}^1,e_{h})+F(e_{h},w_{h}^2),
\end{equation}
where $e_h=w_h^1-w_h^2$.
We first observe that
\begin{eqnarray*}
\|F(e _{h},w_{h}^2)\|_{0}& =& \sup_{\|\phi\|_{0}=1}\bigg|
(e_{h}\cdot \nabla w_{h}^2),\phi)+ {1\over 2}((\nabla \cdot e_{h})w_{h}^2),\phi)\bigg|\\
&\le & C\|e_{h}\|_{L^{2d}}\|\nabla w_{h}^2\|_{L^{2d/(d-1)}}
+C \|e_{h}\|_{1}\|w_{h}^2\|_{L^\infty}\\
&\le & C\bigl(\|\nabla w_{h}^2\|_{L^{2d/(d-1)}}+\|w_{h}^2\|_{L^\infty}
\bigr) \|e_{h}\|_{1},
\end{eqnarray*}
where, in the last inequality, we have used that thanks to Sobolev's inequality
(\ref{sob1}) we have $\|e_{h}\|_{L^{2d}}\le C\|e_{h}\|_{1}$.
Similarly,
\begin{eqnarray*}
\|F(w_{h}^1,e_{h})\|_{0}& \le &
C\|w_{h}^1\|_{L^\infty} \|e_{h}\|_{1} +C\|\nabla w_{h}^1\|_{L^{2d/(d-1)}}
\|e_{h}\|_{L^{2d}}\\
&\le &
C\bigl(\|w_{h}^1\|_{L^\infty}+\|\nabla w_{h}^1\|_{L^{2d/(d-1)}}\bigr)
 \|e_{h}\|_{1}.
\end{eqnarray*}
The proof of the case $j=0$ in~(\ref{eq:aux_bat}) is finished if we show
that for $l=1,2$, both~$\|w_{h}^l\|_{L^\infty}$ and~$\|\nabla w_{h}^l\|_{L^{2d/(d-1)}}$
are bounded in terms of~$M_2$ and the value~$\alpha$ in the threshold
assumption~(\ref{threshold}). To do this, we will use
the inverse inequality~(\ref{inv}) and the fact that the Stokes projection $s_h=S_h(u)$
satisfies that
$$
\|s_{h}\|_{L^\infty} \le C_s,\qquad \|\nabla s_{h}\|_{L^{2d}}\le C_s
$$
for some constant $C_s=C_s(M_2)$ (see for example the proof of~Lemma~3.1
in~\cite{bbj}).
We have
$$
\|w_{h}^l\|_{L^\infty}\le \|w_{h}^l-s_h\|_{L^\infty}+\|s_{h}\|_{L^\infty}
\le Ch^{-d/2} \|w_{h}^l-s_h\|_{0}+\|s_{h}\|_{L^\infty},
$$
where in the last inequality we have applied~(\ref{inv}), and, similarly,
\begin{eqnarray*}
\|\nabla w_{h}^l\|_{L^{2d/(d-1)}}&\le &
\|\nabla (w_{h}^l-s_h)\|_{L^{2d/(d-1)}}+
\|\nabla s_{h}\|_{L^{2d/(d-1)}}\\
&\le&
Ch^{-1/2} \|\nabla (w_{h}^l-s_h)\|_{0}+\|\nabla s_{h}\|_{L^{2d}},
\end{eqnarray*}
where we also have used that $\|\cdot \|_{L^{p}}
\le \|\cdot\|_{L^{p'}}$ for $p<p'$.
Now the threshold assumption~(\ref{threshold}) and~(\ref{stokespro}) show
the boundedness of
$\|w_{h}^l\|_{L^\infty}$ and~$\|\nabla w_{h}^l\|_{L^{2d/(d-1)}}$.

Finally, the proof of the case~$j=1$ in~(\ref{eq:aux_bat}) is, with obvious changes,
that of the equivalent result in~\cite[Lemma 3.1]{bbj}.
\end{proof}

In the sequel we consider the auxiliary function $v_h:[0,T]\rightarrow V_h$ solution of
\begin{equation}\label{atenopv}
 \dot{v}_{h}+     A_{h} v_{h}+ \Pi_h F(u, u)= \Pi_{h}f,\qquad
 v_h(0)=\Pi_{h}u_0.
 \end{equation}
According to~\cite[Remark~4.2]{jbj_regularity} we have
\begin{equation}\label{eq:cota-z-low}
\max_{0\le t\le T}\|v_h(t)-\Pi_hu(t)\|_0\le C |\log(h)|h^{2},\
\end{equation}
for some constant $C=C(M_2)$. The following lemma provides a
superconvergence result.

\begin{lemma}\label{le:sup1}
Let $(u,p)$ be the solution of (\ref{onetwo})--(\ref{ic}). Then, there exists a positive constant $C$ such that the solution $v_h$ of~(\ref{atenopv}) and the Galerkin
approximation~$u_h$ satisfy the following bound,
\begin{equation}\label{supercuad}
 \|v_{h}(t)-u_{h}(t)\|_{1} \le
 {C}|\log(h)|^2
 h^{2},\quad t\in(0,T].
\end{equation}
\end{lemma}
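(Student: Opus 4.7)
The plan is to set $\xi_h = v_h - u_h$, derive a linear evolution equation for $\xi_h$ with forcing given by the nonlinear difference, and then extract superconvergence through a carefully split variation-of-parameters estimate that exploits both the $L^2$ and the $H^{-1}$ bounds on the nonlinear forcing supplied by the previous Lemma.

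Subtracting (\ref{ten})--(\ref{ten2}) from (\ref{atenopv}) gives $\xi_h(0)=0$ and
\begin{equation*}
\dot\xi_h + A_h\xi_h = \Pi_h\bigl[F(u_h,u_h)-F(u,u)\bigr] = \Pi_hG_1+\Pi_hG_2,
\end{equation*}
where I split $G_1=F(u_h,u_h)-F(v_h,v_h)$ and $G_2=F(v_h,v_h)-F(u,u)$. Before applying the Lemma I would check the threshold condition~(\ref{threshold}) for the pair $(u_h,v_h)$: for $u_h$ this follows from (\ref{eq:err_vel(t)}), and for $v_h$ one writes $v_h-u=(v_h-\Pi_h u)+(\Pi_h u-u)$, bounding the first summand in $L^2$ by~(\ref{eq:cota-z-low}), in $H^1$ by combining the inverse inequality~(\ref{inv}) with~(\ref{eq:cota-z-low}), and the second by~(\ref{eq:error-Pi_h}). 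These same computations deliver the key estimates $\|v_h-u\|_0\le C|\log(h)|h^2$ and $\|v_h-u\|_1\le C|\log(h)|h$.

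Applying the Lemma with $(w_h^1,w_h^2)=(u_h,v_h)$ yields $\|\Pi_hG_1\|_0\le C\|\xi_h\|_1$ (case $j=0$) and $\|A_h^{-1/2}\Pi_hG_1\|_0\le C\|\xi_h\|_0$ (case $j=1$). Applying (\ref{4.21-4.22b}) with $w_h^1=v_h$ yields $\|\Pi_hG_2\|_0\le C\|v_h-u\|_1\le C|\log(h)|h$ and $\|A_h^{-1/2}\Pi_hG_2\|_0\le C\|v_h-u\|_0\le C|\log(h)|h^2$. Writing the mild formulation $\xi_h(t)=\int_0^t e^{-(t-s)A_h}\Pi_h(G_1+G_2)(s)\,ds$ and applying $A_h^{1/2}$, I would split the integral at $s=t-h^2$. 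On the near-diagonal piece $[t-h^2,t]$ I use (\ref{lee1}) in the form $\|A_h^{1/2}e^{-(t-s)A_h}\|_0\le C(t-s)^{-1/2}$ together with the $L^2$ bounds on $\Pi_hG_i$; this contributes at most $Ch\sup_s\|\xi_h(s)\|_1+C|\log(h)|h^2$. On the off-diagonal piece $[0,t-h^2]$ I write $A_h^{1/2}e^{-(t-s)A_h}=A_h e^{-(t-s)A_h}\cdot A_h^{-1/2}$, use $\|A_he^{-(t-s)A_h}\|_0\le C(t-s)^{-1}$ and the $H^{-1}$-type bounds, producing $C\int_0^{t-h^2}(t-s)^{-1}\|\xi_h(s)\|_0\,ds+C|\log(h)|h^2\cdot\log(t/h^2)\le C\int_0^{t-h^2}(t-s)^{-1}\|\xi_h(s)\|_0\,ds+C|\log(h)|^2h^2$.

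To close the estimate I would first establish $\|\xi_h(t)\|_0\le C|\log(h)|h^2$ by a companion mild-formulation argument: using the same decomposition with the smoothing bound $\|e^{-(t-s)A_h}A_h^{1/2}\|_0\le C(t-s)^{-1/2}$ and the $H^{-1}$ bounds on $\Pi_hG_i$ one obtains $\|\xi_h(t)\|_0\le C\int_0^t(t-s)^{-1/2}\|\xi_h(s)\|_0\,ds+C|\log(h)|h^2\sqrt{t}$, to which the weakly singular Gronwall lemma applies. Inserting this back into the $H^1$ estimate and absorbing the $Ch\sup_s\|\xi_h(s)\|_1$ term for $h$ small enough yields the desired bound $\|\xi_h(t)\|_1\le C|\log(h)|^2h^2$. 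The principal technical obstacle is avoiding the logarithmic blow-up: naively using either the $L^2$ or the $H^{-1}$ forcing bound throughout produces only an $O(|\log(h)|h)$ estimate; the splitting at $s=t-h^2$ is what trades the dual-norm gain $h^2$ against the logarithm of $t/h^2$ to recover the extra $h\,|\log(h)|$ factor.
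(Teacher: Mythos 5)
Your proof is correct, and it follows the same overall strategy as the paper: the same error equation for $v_h-u_h$, the same splitting of the nonlinear forcing into $F(u_h,u_h)-F(v_h,v_h)$ (Gronwall term) plus $F(v_h,v_h)-F(u,u)$ (forcing term measured in the dual norm), and the same verification of the threshold condition~(\ref{threshold}) via (\ref{eq:err_vel(t)}), (\ref{eq:cota-z-low}), (\ref{eq:error-Pi_h}) and the inverse inequality. The difference is in how the smoothing estimates are organized. The paper works with $y_h=A_h^{1/2}(v_h-u_h)$ and closes a single weakly singular Gronwall inequality directly in $\|y_h\|_0$, using the $j=0$ case of~(\ref{4.21-4.22}) so that the $G_1$ term is bounded in $L^2$ by $C\|y_h\|_0$ against the integrable kernel $(t-s)^{-1/2}$; the logarithmic factor for the $\rho_h$ term is then imported wholesale from the discrete smoothing estimate of \cite[Lemma~4.2]{jbj_regularity}. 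You instead make that logarithmic smoothing explicit by splitting the Duhamel integral at $s=t-h^2$ and trading $\|A_he^{-(t-s)A_h}\|_0\le C(t-s)^{-1}$ against the dual-norm gain, and you apply this splitting to the $G_1$ term as well; this forces you to first establish the companion bound $\|v_h-u_h\|_0\le C|\log(h)|h^2$ by a separate Gronwall argument and to absorb the $Ch\sup_s\|\xi_h(s)\|_1$ contribution from the near-diagonal piece (legitimate, since $\sup_s\|\xi_h(s)\|_1$ is finite a priori). Both routes land on $C|\log(h)|^2h^2$ from the same two sources of logarithms (one from the smoothing integral, one from $\|v_h-u\|_0\le C|\log(h)|h^2$); yours is more self-contained but longer, the paper's is shorter at the price of citing the external smoothing lemma and of avoiding the auxiliary $L^2$ estimate altogether by bounding $G_1$ in $L^2$ rather than in the dual norm.
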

\begin{proof} Since for $y_h=A_h^{1/2}(v_h-u_h)$ we have
$$ \dot y_h +A_h y_h
+A_h^{1/2}\Pi_h(F(v_h,v_h)-F(u_h,u_h)) =A_h^{1/2}\rho_h ,
$$
where $\rho_h=\Pi_h(F(v_h,v_h)-F(u,u))$, it follows that
\begin{align*}
\left\| y_h(t)\right\|_0 \le &
\int_0^t \bigl\|A_h^{1/2}e^{-(t-s)A_h}\bigr\|_0
\left\|\Pi_h(F(v_h,v_h)-F(u_h,u_h))\right\|_0
\\
&{}+
\int_0^t \bigl\|A_h e^{-(t-s)A_h}
(A_h^{-1/2}\rho_h(s))\bigr\|_0
\,ds.
\end{align*}
Applying (\ref{4.21-4.22}) we have
$\left\|\Pi_h(F(v_h,v_h)-F(u_h,u_h))\right\|_0\le
C\left\|y_h\right\|_0$, so that
taking into account that
\begin{eqnarray}\label{eq:vabien}
\bigl\|A_h^{1/2}e^{-(t-s)A_h}\bigr\|_0
\le (2e(t-s))^{-1/2},
\end{eqnarray}
 it follows that
$$
\left\| y_h(t)\right\|_0 \le
\frac{1}{\sqrt{2e}}\int_0^t \frac{\left\|y_h(s)\right\|_0}{\sqrt{t-s}}
+
\int_0^t \bigl\|A_h e^{-(t-s)A_h}
(A_h^{-1/2}\rho_h(s))\bigr\|_0
\,ds.
$$
Since applying~\cite[Lemma~4.2]{jbj_regularity} we obtain
$$
\int_0^t \bigl\|A_h e^{-(t-s)A_h}
(A_h^{-1/2}\rho_h(s))\bigr\|_0
\,ds\le C |\log(h)|\max_{0\le s\le t}\|\rho_h(s)\|_0,
$$
a generalized Gronwall lemma \cite[pp.~188-189]{Henry}, together with
(\ref{4.21-4.22})
allow us to conclude
$$
\|v_h-u_h\|_1\le C|\log(h)| \|v_h-u\|_0.
$$
Then by writing $\|v_h-u\|_0\le \|v_h-\Pi_h u\|_0+\|\Pi_h u-u\|_0$
and applying~(\ref{eq:error-Pi_h}) and~(\ref{eq:cota-z-low}), the proof is finished
if we check that the threshold condition~(\ref{threshold}) holds
for $w_h^1=u_h$ and~$w_h^2=v_h$.
In view of~(\ref{eq:cota-z-low}), (\ref{eq:error-Pi_h}) and the
inverse inequality~(\ref{inv}) we have indeed that $\left\| v_h-u\right\|_j=o(h^{3/2-j})$,
for $j=0,1$.
In the case of~$u_h$ the threshold condition holds due to (\ref{eq:err_vel(t)}).
\end{proof}

\begin{lemma}
\label{le:e_t}
Let $(u,p)$ be the solution of {\rm
(\ref{onetwo})--(\ref{ic})}. Then there exists a positive
constant $C$ such that
\begin{equation}
\|\dot v_h(t)-\dot u_h(t)\|_{-1} \le  {C}|\log(h)|^2
h^{2}, \quad t\in(0,T],
\label{e_t-1}
\end{equation}
where $v_h$ and $u_{h}$ are defined by
 {\rm (\ref{atenopv}) }and {\rm (\ref{ten})-(\ref{ten2})} respectively.
\end{lemma}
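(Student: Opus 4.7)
The plan is to read off an ODE for $v_h - u_h$ and estimate its time derivative directly. First, since $u_h(t) \in V_h$ for every $t$ (being in the finite-dimensional $X_{h,r}$ and satisfying the discrete incompressibility constraint), its time derivative $\dot u_h$ also lies in $V_h$. Testing (\ref{ten}) against $\phi_h \in V_h$ kills the pressure term ($p_h \in Q_{h,r-1}$ is orthogonal to $\mathrm{div}(V_h)$ by definition of $V_h$), yielding the operator identity $\dot u_h + A_h u_h + \Pi_h F(u_h, u_h) = \Pi_h f$. Subtracting from (\ref{atenopv}),
\begin{equation*}
\dot v_h - \dot u_h = -A_h(v_h - u_h) - \Pi_h\bigl(F(u,u) - F(u_h, u_h)\bigr).
\end{equation*}

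Since $\dot v_h - \dot u_h \in V_h$, the equivalence (\ref{A-1equiv}) reduces the task to bounding $\|A_h^{-1/2}(\dot v_h - \dot u_h)\|_0$. Using $\|A_h^{-1/2} A_h w\|_0 = \|A_h^{1/2} w\|_0 = \|w\|_1$ for $w \in V_h$ and the triangle inequality,
\begin{equation*}
\|A_h^{-1/2}(\dot v_h - \dot u_h)\|_0 \le \|v_h - u_h\|_1 + \bigl\|A_h^{-1/2}\Pi_h\bigl(F(u_h,u_h) - F(u,u)\bigr)\bigr\|_0.
\end{equation*}
The first term is bounded by $C|\log(h)|^2 h^2$ by Lemma~\ref{le:sup1}. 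For the second term, the threshold condition (\ref{threshold}) holds for $u_h$ and sufficiently small $h$ by (\ref{eq:err_vel(t)}), so (\ref{4.21-4.22b}) with $w_h^1 = u_h$ and $j=1$ gives $\|A_h^{-1/2}\Pi_h(F(u_h,u_h) - F(u,u))\|_0 \le C\|u - u_h\|_0$.

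To handle this last factor uniformly in $t \in (0,T]$ (since (\ref{eq:err_vel(t)}) has a singularity at $t=0$ when $r\ge 3$), I would use the splitting
\begin{equation*}
\|u - u_h\|_0 \le \|u - \Pi_h u\|_0 + \|\Pi_h u - v_h\|_0 + \|v_h - u_h\|_0,
\end{equation*}
and bound the three summands by (\ref{eq:error-Pi_h}), (\ref{eq:cota-z-low}), and Lemma~\ref{le:sup1} combined with Poincar\'e's inequality, respectively; this yields $\|u - u_h\|_0 \le C|\log(h)|^2 h^2$ uniformly in $t$. Combining everything gives the stated bound. I do not foresee any real obstacle here: this lemma is essentially a direct corollary of Lemma~\ref{le:sup1}, obtained by reading the time derivative off the ODE satisfied by $v_h - u_h$.
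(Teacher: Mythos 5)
Your proposal is correct and follows essentially the same route as the paper: subtract the two evolution equations to get the identity for $\dot v_h-\dot u_h$, apply $A_h^{-1/2}$ and use (\ref{A-1equiv}), bound the linear term by Lemma~\ref{le:sup1} and the nonlinear term via (\ref{4.21-4.22b}) together with the same three-way splitting of $\|u-u_h\|_0$ through $\Pi_h u$ and $v_h$. The only (harmless) differences are that you make the derivation of the discrete operator equation for $u_h$ and the sign in the identity explicit, both of which the paper leaves implicit.
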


\begin{proof} The difference $v_h-u_h$ satisfies that
$\dot v_h-\dot u_h=A_h(v_h-u_h)+\Pi_h(B(u,u) -B(u_h,u_h)$, so that
multiplying by
$A_h^{-1/2}$ and taking norms,
thanks to~(\ref{4.21-4.22b}), we have
$$
\bigl\| A_h^{-1/2} (\dot v_h-\dot u_h)\bigr\|_0 \le \bigl\| A_h^{1/2}(v_h-u_h)\bigr\|_0+
C\left\|u-u_h\right\|_0.
$$
Now we write
$$
\left\|u-u_h\right\|_0\le \left\|u-\Pi_hu\right\|_0+\left\|\Pi_h u-v_h\right\|_0+
\left\|v_h-u_h\right\|_0,
$$
so that~(\ref{eq:error-Pi_h}), (\ref{eq:cota-z-low}) and~(\ref{supercuad}),
allow us to write,
$$
\bigl\| A_h^{-1/2} (\dot v_h-\dot u_h)\bigr\|_0  \le C\left|\log(h)\right|^2h^2.
$$
Then, applying (\ref{A-1equiv}) the proof is finished.
\end{proof}

\begin{lemma}\label{le:z_t}
Let $(u,p)$ be the solution of {\rm
(\ref{onetwo})--(\ref{ic})}. Then there exists a positive
constant $C$ such that
\begin{equation}
\|u_t-\dot u_h(t)\|_{-1} \le  \frac{C}{t^{(r-1)/2}}
h^{r}\left|\log(h)\right|^{r'}, \quad t\in(0,T], \quad r=2,3,4,
\end{equation}
where $r'=2$ when $r=2$ and $r'=1$ otherwise.
\end{lemma}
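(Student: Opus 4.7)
The natural starting point is the decomposition
\[
u_t - \dot u_h = (u_t - \dot v_h) + (\dot v_h - \dot u_h),
\]
where $v_h$ is the auxiliary function defined in~(\ref{atenopv}). The second term is controlled directly by Lemma~\ref{le:e_t}, giving $\|\dot v_h - \dot u_h\|_{-1} \le C|\log(h)|^2 h^2$, which already matches the target bound for the case $r=2$.

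For the first term, the key observation is that $v_h$ is precisely the Galerkin--Stokes approximation to the \emph{linear} parabolic problem $w_t + A w = \Pi(f - F(u,u))$, $w(0) = u_0$, whose unique solution is $u$ itself. Estimating $\|u_t - \dot v_h\|_{-1}$ is therefore a standard negative-norm estimate for the time derivative of the Galerkin error in a linear parabolic Stokes problem. A duality (Aubin--Nitsche type) argument, combined with the regularity bounds~(\ref{eq:u-inf})--(\ref{eq:u-int}) on $u_t$, the comparison~(\ref{stokespro+1}) between $A^{-1}\Pi$ and $A_h^{-1}\Pi_h$, and the smoothing estimate~(\ref{lee1}), should yield
\[
\|u_t - \dot v_h\|_{-1} \le C\,\frac{h^r}{t^{(r-1)/2}}|\log(h)|,\qquad r=2,3,4.
\]

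The main obstacle lies in the cases $r=3,4$, where the bound $O(h^2|\log h|^2)$ delivered by Lemma~\ref{le:e_t} for $\|\dot v_h - \dot u_h\|_{-1}$ is coarser than the target $O(h^r|\log h|/t^{(r-1)/2})$. To close the gap I would revisit the proof of Lemma~\ref{le:e_t}, replacing the coarse $O(|\log h|^2 h^2)$ bound used there for $\|u-u_h\|_0$ by the sharp estimate $\|u-u_h\|_0 \le Ch^r/t^{(r-2)/2}$ from~(\ref{eq:err_vel(t)}), and analogously sharpening Lemma~\ref{le:sup1}: with the additional regularity of $u$ encoded in~(\ref{eq:u-inf})--(\ref{eq:u-int}) for $k=r$, the linear parabolic Galerkin error $\|v_h - u\|_0$ is actually of order $h^r$ rather than the coarse $O(|\log h| h^2)$ bound in~(\ref{eq:cota-z-low}), and this feeds into the generalized Gronwall argument used in the proof of Lemma~\ref{le:sup1} to give a sharper $\|v_h - u_h\|_1$ estimate. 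Assembling these refined ingredients via the triangle inequality yields the claimed bound.
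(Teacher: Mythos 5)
Your outer decomposition $u_t-\dot u_h=(u_t-\dot v_h)+(\dot v_h-\dot u_h)$ is essentially the one the paper uses for $r=2$ (the paper inserts $\Pi_h u_t$ as an intermediate term, splitting off $(I-\Pi_h)u_t$ first), and the treatment of $\dot v_h-\dot u_h$ via Lemma~\ref{le:e_t} is correct. The gap is in the middle term. You dismiss $\|u_t-\dot v_h\|_{-1}$ (equivalently $\|\Pi_h u_t-\dot v_h\|_{-1}$) as ``a standard negative-norm estimate for a linear parabolic problem,'' but this is precisely the heart of the paper's proof and it does not follow routinely from (\ref{eq:u-inf})--(\ref{eq:u-int}), (\ref{stokespro+1}) and (\ref{lee1}). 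The paper derives an evolution equation for the weighted quantity $y_h=t^{1/2}A_h^{-1/2}(\Pi_h u_t-\dot v_h)$, applies the variation-of-constants formula together with the Beta-function bound (\ref{labeta}) to control the singular kernel, and then exploits the algebraic identity $A_h^{-1}(\Pi_h u_t-\dot v_h)=\theta_h-(\Pi_h u-v_h)$ with $\theta_h=(\Pi_h-S_h)u$ to reduce everything to the known bounds (\ref{stokespro}) and (\ref{eq:cota-z-low}). Without this (or an equivalent) argument your proof of the $r=2$ case is incomplete: a naive duality estimate does not produce the correct power of $t$ near the initial time, where the solution lacks the regularity that a textbook Aubin--Nitsche argument would require.

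For $r=3,4$ your route also diverges from the paper in a way that creates real risk. The paper does not reprove these cases at all; it cites \cite[Lemma~5.1]{jbj_regularity}. Your plan to re-derive them by sharpening Lemmas~\ref{le:sup1} and~\ref{le:e_t} rests on the claim that $\|v_h-u\|_0$ is ``actually of order $h^r$'' once $k=r$ regularity is assumed. But the paper explicitly works under the standing assumption that only second-order spatial derivatives are bounded up to $t=0$ (higher regularity would require nonlocal compatibility conditions), which is why (\ref{eq:cota-z-low}) carries the logarithm and why the higher-order bounds (\ref{eq:u-inf})--(\ref{eq:u-int}) degenerate like $\tau(t)^{1-k/2}$ as $t\to 0$. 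Upgrading $\|v_h-u\|_0$ to $O(h^r/t^{(r-2)/2})$ and then propagating that through the generalized Gronwall argument of Lemma~\ref{le:sup1} is exactly the delicate weighted-in-time analysis carried out in the cited reference; it is not a drop-in substitution into the existing proofs. As written, your argument for $r=3,4$ is a program rather than a proof.
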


\begin{proof} The case $r=3,4$ is proved in \cite[Lemma~5.1]{jbj_regularity}.
For the case $r=2$ we write
\begin{equation}\label{demini1}
u_t-\dot u_h=(u_t-\Pi_h u_t)+(\Pi_h u_t-\dot v_h)+(\dot v_h-\dot
u_h).
\end{equation}
A simple duality argument and the fact that  $\|u_t-\Pi_h
u_t\|_0\le Ch \|u_t\|_1$, easily show that
$$
\|(I-\Pi_h)u_t\|_{-1}\le C h^2\|u_t\|_1\le C\frac{M_3}{t^{1/2}} h^2.
$$
The bound of the third term on the right-hand side of~(\ref{demini1}) is
given in Lemma~\ref{le:e_t}, so that, thanks to the equivalence~(\ref{A-1equiv})
we are left with estimating
$$y_h=t^{1/2}A_h^{-1/2}(\Pi_h u_t-\dot v_h).$$
We notice that
$$
 \dot y_h+A_h y_h=t^{1/2} A_h^{1/2}\dot
 \theta_h+\frac{1}{2}t^{-1/2}A _h^{-1/2}(\Pi_h u_t-\dot v_h),
 $$
where $\theta_h=(\Pi_h-S_h)u$. Thus,
\begin{eqnarray*}
y_h(t)&=&\int_0^t s^{-1/2}A_h^{1/2}e^{-(t-s)A_h} \bigl(s\dot\theta_h\bigr)\,ds\nonumber\\
&&\quad+\frac{1}{2}
\int_0^t s^{-1/2}A_h^{1/2}e^{-(t-s)A_h}A_h^{-1}(\Pi_h u_s-\dot v_h)\,ds.
\end{eqnarray*}
Recalling (\ref{eq:vabien}) by means of
the change of
variables $\tau=s/t$ it is easy to show that
\begin{equation}
\int_0^ts^{-1/2}\bigl\| A_h^{1/2}e^{-(t-s)A_h}\bigr\|_0\,ds
\le \frac{1}{\sqrt{2e}} B(\frac{1}{2},\frac{1}{2}),
\label{labeta}
\end{equation}
where $B$ is the Beta function (see e. g., \cite{Abramowitz-Stegun}).
Thus, we have
$$
 \|y_h\|_0\le CB(\frac{1}{2},\frac{1}{2})\max_{0\le s\le t}\left(s\|\dot
 \theta_h\|_0+\|A_h^{-1}(\Pi_h u_s-\dot v_h)\|_0\right).
 $$
The first term on the right-hand side above is bounded by $CM_4 h^2$. For the
second one we notice hat
$$
A_h^{-1}(\Pi_h u_t-\dot v_h)=\theta_h-(\Pi_h u- v_h)
$$
so that using (\ref{eq:error-Pi_h}), (\ref{stokespro}) and (\ref{eq:cota-z-low}) it is bounded by $M_2h^2\left|\log(h)\right|$.
\end{proof}

\begin{theorem}\label{semi_disve}
Let $(u,p)$ be the solution of (\ref{onetwo})-(\ref{ic}). Then, there exists a positive constant $C$ such that the postprocessed velocity
 $\tilde u$, defined in {\rm (\ref{posth0})-(\ref{posth1})}, satisfies the following bounds:

 {(i)}  If $r=2$ then
\begin{eqnarray}\label{utilde_lineal}
\|u({t^*})-\tilde u({t^*})\|_1\le \frac{C}{{t^*}^{(1/2)}} h^2|\log(h)|^2.
\end{eqnarray}

(ii) If $r=3,4$ then
\begin{eqnarray}\label{utilde_mas}
\|u({t^*})-\tilde u({t^*})\|_j\le \frac{C}{{t^*}^{(r-1)/2}}{h^{r+1-j}}|\log(h)|,\quad j=0,1.
\end{eqnarray}

\end{theorem}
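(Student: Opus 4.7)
Plan. The starting point is the observation, already highlighted in the paper, that subtracting the weak form of~(\ref{onetwo}) at $t=t^*$ from~(\ref{posth0})--(\ref{posth1}) makes the pair $(e,\epsilon) := (u(t^*)-\tilde u(t^*),\,p(t^*)-\tilde p(t^*))$ the solution of a steady Stokes problem
$$
(\nabla e,\nabla\phi) + (\nabla\epsilon,\phi) = (g,\phi), \qquad (\nabla\cdot e,\psi) = 0,
$$
for all $\phi\in H_0^1(\Omega)^d$ and $\psi\in L^2(\Omega)/\mathbb{R}$, with right-hand side
$g = (\dot u_h - u_t) + (F(u_h,u_h) - F(u,u))$
(all quantities evaluated at $t^*$). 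The proof then reduces to controlling $g$ in suitable negative norms.

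For the $H^1$ estimate (case~(i) and the $j=1$ branch of case~(ii)) I apply Stokes regularity~(\ref{eq:may06-nueva}) with $j=-1$ to obtain $\|e\|_1 \le C\|g\|_{-1}$. Lemma~\ref{le:z_t} bounds $\|u_t-\dot u_h\|_{-1}$ by $Ch^r|\log h|^{r'}/t^{*(r-1)/2}$. For the nonlinear piece, inequality~(\ref{eq:aux_bat}) with $j=1$ (derived inside the proof of Lemma~1 of the excerpt for $w=u$) gives $\|F(u_h,u_h)-F(u,u)\|_{-1}\le C\|u_h-u\|_0$; the threshold condition~(\ref{threshold}) needed for this is met by $u_h$ at small~$h$ thanks to~(\ref{eq:err_vel(t)}), which moreover provides $\|u-u_h\|_0\le Ch^r/t^{*(r-2)/2}$. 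Since $1/t^{*(r-1)/2}$ dominates $1/t^{*(r-2)/2}$ for $t^*\le 1$, adding the two contributions produces exactly~(\ref{utilde_lineal}) and the $j=1$ part of~(\ref{utilde_mas}).

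The $L^2$ bound, that is the $j=0$ half of case~(ii) for $r=3,4$, is treated by duality. For $\phi\in L^2(\Omega)^d$ let $z := A^{-1}\Pi\phi\in V\cap H^2(\Omega)^d$, which by~(\ref{eq:may06-nueva}) satisfies $\|z\|_2 \le C\|\phi\|_0$. Testing the Stokes system for $(e,\epsilon)$ with $z$ and using $\nabla\cdot z = 0$ to annihilate the pressure term yields $(e,\phi) = (g,z)$, so one must show $|(g,z)| \le Ch^{r+1}|\log h|/t^{*(r-1)/2}\|\phi\|_0$. For the nonlinear contribution I would write $F(u_h,u_h) - F(u,u) = F(u_h-u,u_h) + F(u,u_h-u)$, invoke skew-symmetry~(\ref{skew}), and exploit the smoothness of $z$ together with the $L^\infty$ and $W^{1,3}$ bounds on $u_h$ (derivable as in the proof of Lemma~1 via the Stokes projection $s_h=S_h(u)$); the crude outcome is $C\|u-u_h\|_0\|z\|_2$, which is one power of $h$ short. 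The extra factor is recovered by inserting the auxiliary $v_h$ from~(\ref{atenopv}), writing $u-u_h = (u-v_h) + (v_h-u_h)$, and combining the superconvergence $\|v_h-u_h\|_1 \le C|\log h|^2 h^2$ of Lemma~\ref{le:sup1} with~(\ref{eq:cota-z-low}), again integrating by parts against the smooth $z\in H^2$. The time term $(\dot u_h-u_t,z)$ is split analogously as $(\dot u_h-\dot v_h,z) + (\dot v_h-u_t,z)$: the first piece is bounded by Lemma~\ref{le:e_t}, while the second is rewritten via~(\ref{atenopv}) as a combination of projection errors acting on $z$, controlled by~(\ref{eq:error-Pi_h}) and~(\ref{stokespro+1}). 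The main obstacle is precisely gaining this extra power of $h$ in the $L^2$ norm; it forces the detour through $v_h$ and demands careful bookkeeping of the $\tfrac12(\nabla\cdot a)b$ component of $F$, since neither $u_h$ nor $v_h$ is divergence-free in the continuous sense.
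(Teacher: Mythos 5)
Your $H^1$ argument coincides with the paper's: subtract the two Stokes systems, bound $\|u({t^*})-\tilde u({t^*})\|_1$ by $\|u_t-\dot u_h\|_{-1}+\|F(u,u)-F(u_h,u_h)\|_{-1}$, and invoke Lemma~\ref{le:z_t}, the estimate (\ref{eq:aux_bat}) with $j=1$, and (\ref{eq:err_vel(t)}). That part is correct and needs no further comment.

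The $L^2$ half contains a genuine gap. The bound (\ref{utilde_mas}) with $j=0$ is claimed only for $r=3,4$, so the extra power of $h$ you must gain over $\|u-u_h\|_0\sim h^r$ has to take you to $h^{r+1}|\log h|$ with $r+1=4$ or $5$. But every superconvergence tool you propose to use for this purpose --- Lemma~\ref{le:sup1} (which gives $\|v_h-u_h\|_1\le Ch^2|\log h|^2$), Lemma~\ref{le:e_t} (which gives $\|\dot v_h-\dot u_h\|_{-1}\le Ch^2|\log h|^2$) and (\ref{eq:cota-z-low}) (which gives $\|v_h-\Pi_hu\|_0\le Ch^2|\log h|$) --- is an $O(h^2)$ statement; these lemmas appear in the paper only to cover the mini-element case $r=2$, for which no $L^2$ improvement is asserted. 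Inserting $v_h$ and pairing against $z\in H^2(\Omega)^d$ therefore cannot produce $h^4$ or $h^5$, and your sketch stalls exactly where you say the main obstacle lies. The paper closes the $L^2$ case by applying $A^{-1}\Pi$ to the error equation and importing the order-$h^{r+1}$ results of \cite{jbj_regularity}: Lemma~4.1 there, which yields $\|A^{-1}\Pi(F(u,u)-F(u_h,u_h))\|_0\le C\bigl(\|u-u_h\|_{-1}+\|u-u_h\|_1\|u-u_h\|_0\bigr)$ (note the quadratic term, which your crude bound $C\|u-u_h\|_0\|z\|_2$ does not reproduce and which is essential for the count of powers of $h$); the negative-norm superconvergence $\|u-u_h\|_{-1}\le C\,h^{r+1}|\log h|\,{t^*}^{-(r-2)/2}$ from the proof of Theorem~5.2 there; and $\|A^{-1}\Pi(u_t-\dot u_h)\|_0\le C\,h^{r+1}|\log h|\,{t^*}^{-(r-1)/2}$ from Lemma~5.1 there. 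Your instinct that a negative-norm superconvergence estimate for $u-u_h$ is the missing ingredient is correct, but the general-$r$ versions required are not the $r=2$ lemmas you cite.
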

\begin{proof}
The proof follows the same steps as \cite[Theorem~5.2]{jbj_regularity}.
Subtracting~(\ref{posth0}) from~(\ref{onetwo}), standard duality arguments show that
$$
\|\tilde u({t^*})-u({t^*})\|_1\le C\bigl(\|F(u({t^*}),u({t^*}))-F(u_h({t^*}),u_h({t^*}))\|_{-1}+\|u_t({t^*})-\dot u_h({t^*})\|_{-1}\bigr).
$$
To bound the second term on the right-hand side above we apply Lemma~\ref{le:z_t},
whereas for the second we apply (\ref{eq:aux_bat}) to get
\begin{equation}\label{cota_fuuh}
\|F(u({t^*}),u({t^*}))-F(u_h({t^*}),u_h({t^*}))\|_{-1}\le C\|u({t^*})-u_h({t^*})\|_0.
\end{equation}
so that applying (\ref{eq:err_vel(t)}) the proof
of~(\ref{utilde_lineal}) and the case $j=1$ of~(\ref{utilde_mas})
are finished.

We now get the error bounds in the $L^2$ norm.
It is easy to see that
$$
A(\tilde u({t^*})-u({t^*}))=\Pi (F(u({t^*}),u({t^*}))-F(u_h({t^*}),u_h({t^*})))+\Pi(u_t({t^*})-\dot u_h({t^*})).
$$
Then, by applying $A^{-1}$ to both sides of the above equations, we obtain
\begin{align*}
\|\tilde u({t^*})-u({t^*})\|_0 \le &\|A^{-1}\Pi(F(u({t^*}),u({t^*}))-F(u_h({t^*}),u_h({t^*})))\|_0\\
&{}+\|A^{-1}\Pi (u_t({t^*})-\dot u_h({t^*}))\|_0.
\end{align*}

As regards the nonlinear term, applying \cite[Lemma~4.1]{jbj_regularity} we obtain
\begin{multline*}
\|A^{-1}\Pi(F(u({t^*}),u({t^*}))-F(u_h({t^*}),u_h({t^*})))\|_0\le\\
C(\|u({t^*})-u_h({t^*})\|_{-1}+\|u({t^*})-u_h({t^*})\|_1\|u({t^*})-u_h({t^*})\|_0).
\end{multline*}
To bound the second term on the right-hand side above we apply (\ref{eq:err_vel(t)}), whereas the first one is bounded in the proof of \cite[Theorem~5.2]{jbj_regularity} by
$$
\|u({t^*})-u_h({t^*})\|_{-1}\le \frac{C}{{t^*}^{(r-2)/2}}{h^{r+1}}|\log(h)|.
$$
Finally, to bound  $\|A^{-1}\Pi (u_t({t^*})-\dot u_h({t^*}))\|_0$
we apply \cite[Lemma~5.1]{jbj_regularity} to obtain
$$
\|A^{-1}\Pi (u_t({t^*})-\dot u_h({t^*}))\|_0\le \frac{C}{{t^*}^{(r-1)/2}}h^{r+1}|\log(h)|,
$$
which concludes the proof.
\end{proof}

In the following theorem we obtain the error bounds for the pressure $\tilde p$.
\begin{theorem}\label{semi_dispre}
Let $(u,p)$ be the solution of (\ref{onetwo})-(\ref{ic}). Then, there exists a positive constant $C$ such that the postprocessed pressure,
$\tilde p$, satisfies the following bounds:
\begin{eqnarray}\label{ptilde_mas}
\|p({t^*})-\tilde p({t^*})\|_{L^2/{\Bbb R}}\le \frac{C}{{t^*}^{(r-1)/2}}{h^{r}}|\log(h)|^{r'},
\end{eqnarray}
where $r'=2$ if $r=2$ and $r'=1$ if $r=3,4$.

\end{theorem}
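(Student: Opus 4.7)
The plan is to apply the continuous inf-sup condition (\ref{inf-sup}) to the pressure difference $\tilde p(t^*)-p(t^*)$. Since both $\tilde p(t^*)$ and $p(t^*)$ live in $L^2(\Omega)/\mathbb R$, (\ref{inf-sup}) gives
$$
\beta\,\|\tilde p({t^*})-p({t^*})\|_{L^2/\mathbb R}\le \sup_{\phi\in H_0^1(\Omega)^d}\frac{(\nabla(\tilde p({t^*})-p({t^*})),\phi)}{\|\phi\|_1}.
$$
I would then evaluate the numerator by subtracting the weak form of the momentum equation in (\ref{onetwo}) from the definition (\ref{posth0}) of $\tilde p(t^*)$. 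This yields, for every $\phi\in H_0^1(\Omega)^d$,
$$
(\nabla(\tilde p({t^*})-p({t^*})),\phi) = \bigl(u_t(t^*)-\dot u_h(t^*),\phi\bigr) + \bigl(F(u(t^*),u(t^*))-F(u_h(t^*),u_h(t^*)),\phi\bigr) - (\nabla(\tilde u(t^*)-u(t^*)),\nabla\phi).
$$

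Next, each of the three terms is estimated by quantities already controlled in the excerpt. The first is bounded by $\|u_t(t^*)-\dot u_h(t^*)\|_{-1}\|\phi\|_1$, which by Lemma~\ref{le:z_t} contributes $Ch^r|\log(h)|^{r'}/{t^*}^{(r-1)/2}$, with $r'=2$ if $r=2$ and $r'=1$ otherwise. The second is bounded by $\|F(u(t^*),u(t^*))-F(u_h(t^*),u_h(t^*))\|_{-1}\|\phi\|_1$, and applying (\ref{cota_fuuh}) together with (\ref{eq:err_vel(t)}) yields $C\,\|u(t^*)-u_h(t^*)\|_0 \le C h^r /{t^*}^{(r-2)/2}$, which is of strictly smaller order than the first term. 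The third term is simply bounded by $\|\tilde u(t^*)-u(t^*)\|_1\|\phi\|_1$, and Theorem~\ref{semi_disve} furnishes $Ch^2|\log(h)|^2/{t^*}^{1/2}$ when $r=2$ and $Ch^r|\log(h)|/{t^*}^{(r-1)/2}$ when $r=3,4$.

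Dividing by $\|\phi\|_1$ and taking the supremum, the three contributions combine to give exactly the claimed bound (\ref{ptilde_mas}), since in every case the leading term comes from $\|u_t(t^*)-\dot u_h(t^*)\|_{-1}$ (which matches the exponent of the logarithm stated in the theorem). There is no real obstacle here: once the inf-sup step is in place, the proof is essentially a bookkeeping of the already-proved bounds on the velocity, its time derivative, and the nonlinearity. The only point deserving a little care is the verification that the logarithmic factor $r'=2$ appears in the $r=2$ case both through Lemma~\ref{le:z_t} and through the $H^1$ bound for $\tilde u-u$ in Theorem~\ref{semi_disve}, so the two sources are consistent and no hidden cancellation or deterioration arises.
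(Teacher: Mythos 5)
Your proposal is correct and follows essentially the same route as the paper: the paper's proof likewise applies the inf-sup condition (\ref{inf-sup}) to reduce $\beta\|p(t^*)-\tilde p(t^*)\|_{L^2/\mathbb{R}}$ to the sum $\|\tilde u(t^*)-u(t^*)\|_1+\|u_t(t^*)-\dot u_h(t^*)\|_{-1}+\|F(u_h(t^*),u_h(t^*))-F(u(t^*),u(t^*))\|_{-1}$ and then invokes Theorem~\ref{semi_disve}, Lemma~\ref{le:z_t} and (\ref{cota_fuuh}) with (\ref{eq:err_vel(t)}), exactly as you do. Your accounting of the logarithmic exponents in the two cases also matches the paper's conclusion.
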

\begin{proof}
The proof follows the same steps as \cite[Theorem~5.3]{jbj_regularity}. Applying the inf-sup condition (\ref{inf-sup})
it is easy to see that
\begin{eqnarray*}
\beta \|p({t^*})-\tilde p({t^*})\|_{L^2/{\Bbb R}}&\le& \|\tilde u({t^*})-u({t^*})\|_1+\|u_t({t^*})-\dot u_h({t^*})\|_{-1}
\\
&&\quad+\|F(u_h({t^*}),u_h({t^*}))-F(u({t^*}),u({t^*}))\|_{-1}.
\end{eqnarray*}
Applying now (\ref{utilde_lineal}) and (\ref{utilde_mas}) to bound the first term and reasoning as in the
proof of Theorem~\ref{semi_disve} to bound the other two terms we conclude (\ref{ptilde_mas}).
\end{proof}

\begin{remark}\label{remark31}
As a consequence of Theorems~\ref{semi_disve} and~\ref{semi_dispre} we obtain in the proof of Theorem~\ref{apos_semi1} that $(\tilde u-u_h)$ is an asymptotically exact
 estimator of the error $(u-u_h)$ while $(\tilde p-p_h)$ is an asymptotically exact  estimator of the error $(p-p_h)$. However, as we have
 already observed $\tilde u$ and $\tilde p$ are not computable in practice. In Theorems~\ref{apos_semi1}, 4 and \ref{th_pos_esti} we present
 different procedures to get computable error estimators.
\end{remark}

As we pointed out before the MFE approximations $(u_h,p_h)$ to the velocity and the pressure of the solution $(u,p)$ of the
evolutionary Navier-Stokes equations (\ref{onetwo})-(\ref{ic}) at any fixed time ${t^*}$ are also the approximations to the velocity and pressure
of the steady Stokes problem (\ref{posth0})-(\ref{posth1}). In Theorem~\ref{apos_semi1} we show that any a posteriori error estimator of the
error in the steady Stokes problem (\ref{posth0})-(\ref{posth1}) gives us an a posteriori indicator of the error in the
approximations to the evolutionary Navier-Stokes equations.

Using the notation of~\cite{kara-makri} we will denote in the sequel by $\xi_{\rm vel}((u_h,p_h),f,H^j)$, $j=0,1$, any a posteriori error estimator of the error $u_h-\tilde u$ in the
norm of $H^j(\Omega)^d$
in the approximation to the velocity in the steady Stokes problem (\ref{posth0})-(\ref{posth1}). We will denote by
$\xi_{\rm pres}((u_h,p_h),f,L^2/{\Bbb R})$ any error estimator of the quantity $\|p_h-\tilde p\|_{L^2/{\Bbb R}}$.

\begin{theorem}\label{apos_semi1}
Let $(u,p)$ be the solution of {\rm (\ref{onetwo})-(\ref{ic})} and
fix any positive time ${t^*}>0$. Assume that the Galerkin
approximation $(u_h,p_h)$ satisfies, for $h$ small enough and
$r=2,3,4$,
\begin{equation}\label{saturacion}
\|u({t^*})-u_h({t^*})\|_j\ge {C_r} h^{r-j},\quad j=0,1.
\end{equation}
for some positive constant $C_r=C_r(t^*)$.

(i)  If $\xi_{\rm vel}((u_h({t^*}),p_h({t^*})),f,H^j)$, $j=0,1$, is
an efficient error indicator of the error in the MFE approximation
to the steady Stokes problem  {\rm (\ref{posth0})-(\ref{posth1})}.
That is, if there exist positive constants, $C_1$ and $C_2$, that
are independent of the mesh size $h$, such that the following bound
holds
\begin{equation}\label{efficient_steady}
C_1\le \frac{\xi_{\rm vel}((u_h({t^*}),p_h({t^*})),f,H^j)}{\|\tilde
u({t^*})-u_h({t^*})\|_j}\le C_2,\quad j=0,1,
\end{equation}
 then $\xi_{\rm vel}((u_h({t^*}),p_h({t^*})),f,H^j)$, $j=0,1$, it
is also an efficient error indicator of the error in the MFE
approximation to the evolutionary Navier-Stokes equations, i.e.
there exist positive constants $C_3$ and $C_4$ that are independent
of the mesh size $h$ such that the following bound holds for $h$
small enough
\begin{equation}\label{efficient_evol}
C_3\le \frac{\xi_{\rm vel}((u_h({t^*}),p_h({t^*})),f,H^j)}{\|\
u({t^*})-u_h({t^*})\|_j}\le C_4,\quad j=0,1.
\end{equation}

(ii)  If $\xi_{\rm vel}((u_h({t^*}),p_h({t^*})),f,H^j)$, $j=0,1$ is
an asymptotically exact error estimator of the error in the steady
Stokes problem then it is also an asymptotically exact error
estimator of the error in the evolutionary Navier-Stokes equations.

(iii)  Analogous results are obtained in the approximations to the
pressure.

 In the case $r=2$, the results are valid only in the $H^1$ norm.

\end{theorem}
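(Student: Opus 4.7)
The plan is to reduce the statement to a triangle-inequality argument, leveraging the fact that, by Theorems~\ref{semi_disve} and~\ref{semi_dispre} together with the saturation hypothesis~(\ref{saturacion}), the postprocessed error is of strictly smaller order in~$h$ than the Galerkin error. More precisely, in the $H^j$ norm with $j=0,1$ for $r=3,4$, and in the $H^1$ norm for $r=2$, Theorem~\ref{semi_disve} gives a bound of the form $\|u(t^*)-\tilde u(t^*)\|_j\le C(t^*)h^{r+1-j}|\log(h)|^\alpha$, whereas~(\ref{saturacion}) gives $\|u(t^*)-u_h(t^*)\|_j\ge C_r h^{r-j}$. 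Dividing, the quotient $\|u-\tilde u\|_j/\|u-u_h\|_j$ is $O(h|\log(h)|^\alpha)$ and therefore tends to zero as $h\to 0$. This is the key observation that all three parts of the theorem rest upon; the restriction to $j=1$ when $r=2$ is forced by the mini-element only providing an improved approximation in the $H^1$ norm, so the quotient cannot be shown to vanish in the $L^2$ norm.

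Choose $h_0>0$ small enough that $\|u-\tilde u\|_j\le\tfrac12\|u-u_h\|_j$ for all $h\le h_0$. The triangle inequality then yields
$$\tfrac12\|u(t^*)-u_h(t^*)\|_j\le\|\tilde u(t^*)-u_h(t^*)\|_j\le\tfrac32\|u(t^*)-u_h(t^*)\|_j,$$
so that the (non-computable) quantity $\|\tilde u-u_h\|_j$ is equivalent to the true error $\|u-u_h\|_j$ up to universal constants. Combining this equivalence with the hypothesis~(\ref{efficient_steady}) yields~(\ref{efficient_evol}) with, for instance, $C_3=2C_1/3$ and $C_4=2C_2$, which proves part~(i). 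For part~(ii), the identity
$$\frac{\xi_{\rm vel}}{\|u(t^*)-u_h(t^*)\|_j}=\frac{\xi_{\rm vel}}{\|\tilde u(t^*)-u_h(t^*)\|_j}\cdot\frac{\|\tilde u(t^*)-u_h(t^*)\|_j}{\|u(t^*)-u_h(t^*)\|_j}$$
finishes the argument: the first factor on the right tends to $1$ by the asymptotic-exactness assumption for the Stokes problem, and the second factor tends to $1$ because its deviation from~$1$ is bounded in absolute value by $\|u-\tilde u\|_j/\|u-u_h\|_j\to 0$.

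Part~(iii) for the pressure follows by the same scheme, using Theorem~\ref{semi_dispre} in place of Theorem~\ref{semi_disve} to control $\|p(t^*)-\tilde p(t^*)\|_{L^2/{\Bbb R}}$, together with the natural pressure analog of the saturation assumption~(\ref{saturacion}), namely $\|p(t^*)-p_h(t^*)\|_{L^2/{\Bbb R}}\ge C'_r h^{r-1}$; again the ratio $\|p-\tilde p\|_{L^2/{\Bbb R}}/\|p-p_h\|_{L^2/{\Bbb R}}$ is $O(h|\log(h)|^{r'})\to 0$, and the triangle-inequality sandwich transfers efficiency and asymptotic exactness from the Stokes problem to the evolutionary one. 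There is no real technical obstacle here: the whole argument is standard hierarchical-estimator reasoning, and its merit lies in the superconvergence of $(\tilde u,\tilde p)$ over $(u_h,p_h)$ already established in Theorems~\ref{semi_disve} and~\ref{semi_dispre}, not in the manipulations carried out in this proof.
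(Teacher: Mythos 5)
Your proof is correct and follows essentially the same route as the paper's: the triangle inequality combined with the superconvergence of $(\tilde u,\tilde p)$ from Theorems~\ref{semi_disve} and~\ref{semi_dispre} and the saturation assumption~(\ref{saturacion}) shows that $\|\tilde u({t^*})-u_h({t^*})\|_j$ and $\|u({t^*})-u_h({t^*})\|_j$ are equivalent up to factors $1/2$ and $3/2$ for $h$ small, after which the efficiency or asymptotic exactness for the Stokes problem transfers directly (the paper interleaves these two steps instead of establishing the sandwich first, but the content is identical, including your observation that a pressure analogue of the saturation hypothesis is implicitly needed for part (iii)). The only blemish is the stated example constant $C_3=2C_1/3$: the sandwich actually yields $C_3=C_1/2$ and $C_4=3C_2/2$ (as in the paper), but this is immaterial since only the existence of such constants is claimed.
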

\begin{proof} For simplicity in the exposition we will concentrate on the cases $r=3,4$ in the approximations to the velocity, the
proof for the approximations to the pressure and for the case $r=2$ being the same
except for  obvious changes.

Let us first observe that
$$
\|u_h({t^*})-u({t^*})\|_j\le \|u_h({t^*})-\tilde u({t^*})\|_j+\|\tilde u({t^*})-u({t^*})\|_j,\quad j=0,1.
$$
Dividing by $\|u_h({t^*})-u({t^*})\|_j$, using (\ref{saturacion}) and applying
Theorem~\ref{semi_disve} we obtain
$$
1\le \frac{\|u_h({t^*})-\tilde u({t^*})\|_j}{\|u_h({t^*})-u({t^*})\|_j}+\frac{C {t^*}^{-((r-1)/2)}}{C_r}h|\log(h)|.
$$
Now, using (\ref{efficient_steady}) we get
\begin{eqnarray*}
\frac{\|u_h({t^*})-\tilde u({t^*})\|_j}{\|u_h({t^*})-u({t^*})\|_j}&=&\frac{\|u_h({t^*})-\tilde u({t^*})\|_j}{\|u_h({t^*})-u({t^*})\|_j}
\frac{\xi_{\rm vel}((u_h({t^*}),p_h({t^*})),f,H^j)}{\xi_{\rm vel}((u_h({t^*}),p_h({t^*})),f,H^j)}\nonumber\\
&\le& \frac{1}{C_1}\frac{\xi_{\rm vel}((u_h({t^*}),p_h({t^*})),f,H^j)}{\|u_h({t^*})-u({t^*})\|_j}.
\end{eqnarray*}
Taking $h$ small enough so that $\frac{C {t^*}^{-((r-1)/2)}}{C_r}h|\log(h)|\le {1/2},$  we get
\begin{equation}\label{primer}
\frac{C_1}{2}\le \frac{\xi_{\rm vel}((u_h({t^*}),p_h({t^*})),f,H^j)}{\|u_h({t^*})-u({t^*})\|_j}.
\end{equation}
Now, we use the decomposition
\begin{equation}\label{eq:asterisco}
\|u_h({t^*})-\tilde u({t^*})\|_j\le \|u_h({t^*})- u({t^*})\|_j+\| u({t^*})-\tilde u({t^*})\|_j,\quad j=0,1.
\end{equation}
Reasoning as before we get
$$
\frac{\|u_h({t^*})-\tilde u({t^*})\|_j}{\|u_h({t^*})-u({t^*})\|_j}\le 1+\frac{C {t^*}^{-((r-1)/2)}}{C_r}h|\log(h)|.
$$
Since
$$
\frac{\|u_h({t^*})-\tilde u({t^*})\|_j}{\|u_h({t^*})-u({t^*})\|_j}\ge
\frac{1}{C_2}\frac{\xi_{\rm vel}((u_h({t^*}),p_h({t^*})),f,H^j)}{\|u_h({t^*})-u({t^*})\|_j},
$$
we finally reach
\begin{equation}\label{segun}
\frac{\xi_{\rm vel}((u_h({t^*}),p_h({t^*})),f,H^j)}{\|u_h({t^*})-u({t^*})\|_j}\le \frac{3C_2}{2}.
\end{equation}

\noindent From (\ref{primer}) and (\ref{segun}) we conclude (\ref{efficient_evol}) with $C_3=C_1/2$ and $C_4=3C_2/2$.

Let us now assume that ${\xi_{\rm vel}((u_h({t^*}),p_h({t^*})),f,H^j)}$ is an asymptotically exact error estimator. Using again the decomposition~(\ref{eq:asterisco}) we have
$$
\lim_{h\rightarrow 0}\frac{\|u_h(t^*)-\tilde u(t^*)\|_j}{\|u_h(t^*)-u(t^*)\|_j}= 1
+\lim_{h\rightarrow 0}\frac{\|u(t^*)-\tilde u(t^*)\|_j}{\|u_h(t^*)-u(t^*)\|_j}
=1,
$$
the last equality being a consequence of~Theorem~(\ref{semi_disve}) and~the
saturation hypothesis~(\ref{saturacion}).
As we pointed out before, this limit implies that $(\tilde u-u_h)$
is an asymptotically exact estimator
of the error $(u-u_h)$.
Then
\begin{multline*}
\lim_{h\rightarrow 0}\frac{\xi_{\rm vel}((u_h({t^*}),p_h({t^*})),f,H^j)}{\|u_h(t^*)-u(t^*)\|_j}\\
=
\lim_{h\rightarrow 0}\frac{\xi_{\rm vel}((u_h({t^*}),p_h({t^*})),f,H^j)}{\|u_h(t^*)-\tilde u(t^*)\|_j}
\frac{\|u_h(t^*)-\tilde u(t^*)\|_j}{\|u_h(t^*)-u(t^*)\|_j}=1,
\end{multline*}
and ${\xi_{\rm vel}((u_h({t^*}),p_h({t^*})),f,H^j)}$ is also an asymptotically exact estimator of the error in the
approximation to the velocity of the evolutionary Navier-Stokes equations.
\end{proof}
\begin{remark}
We remark that with hypothesis (\ref{saturacion}) we are merely assuming that the term of order $h^{r-j}$ is really
present in the asymptotic expansion of the Galerkin error. Let us also notice that the constant $C_r$ in~(\ref{saturacion}) is, in general
$O({t^*}^{-(r-2)/2})$, so that the ratio ${t^*}^{-((r-1)/2)}/C_r$ in the proof
of~Theorem~\ref{apos_semi1} is, in general, $O({t^*}^{(-1/2)})$.
\end{remark}

The key point in Theorem~\ref{apos_semi1} comes from the observation that
if we decompose
\begin{equation}\label{decom_error}
u-u_h=(u-\tilde u)+(\tilde u-u_h),
\end{equation}
the first term on the right hand side of (\ref{decom_error}),
$u-\tilde u,$ is in general smaller, by a factor of size
$O(h\log(h))$, than the second one, $\tilde u-u_h$
(Theorem~\ref{semi_disve}). Then, to estimate the error $u-u_h$ we
can safely omit the term $u-\tilde u$ in (\ref{decom_error}).
Comparing with the analysis of \cite{kara-makri} for a nonstationary
linear Stokes model problem the main difference is that the two
terms in (\ref{decom_error}) are taken into account. In
Theorem~\ref{the_new} below we show that this kind of technique can
also be applied to the nonlinear Navier-Stokes equations.  The
advantage of this point of view is that hypothesis
(\ref{saturacion}) is not required for the proof of
Theorem~\ref{the_new}.
Let us finally observe that  $(\dot u_h,\dot
p_h)$ are the MFE approximations to the solution $( \tilde
u_t,\tilde p_t)$ of the Stokes problem that we obtain deriving
respect to the time variable the Stokes problem
(\ref{posth0})-(\ref{posth1}). Then, we will denote by $\xi_{\rm
vel}((\dot u_h,\dot p_h),f_t,H^j)$, $j=-1,0,1$, any a posteriori
error estimator of the error $u_h-\tilde u_t$ in the norm of
$H^j(\Omega)^d$ in the approximation to the velocity of the
corresponding steady Stokes problem. The proof of the following
theorem follows the steps of the proof of \cite[Theorem
1]{Javier-Julia-estiman-2}.
\begin{theorem}\label{the_new}
Let $(u,p)$ be the solution of (\ref{onetwo})-(\ref{ic}) and let $(u_h,p_h)$ be its MFE Galerkin approximation. Then, the following
a posteriori error bound holds for $0\le t\le T$ and a constant $C$ independent of $h$.
\begin{eqnarray}\label{eq:new}
&&\|(u-u_h)(t)\|_0\le C\|u_0-u_h(0)\|_0+C\xi_{\rm vel}((u_h(0), p_h(0)),f(0),L^2)\qquad
\nonumber\\&&\ +\xi_{\rm vel}((u_h(t), p_h(t)),f(t),L^2)
 +Ct^{1/2}\max_{0\le s\le t}\xi_{\rm vel}((u_h, p_h),f,L^2)\\
 &&\ +C t^{1/2}\max_{0\le s\le t}\xi_{\rm vel}((\dot u_h, \dot p_h),f_s,H^{-1})\nonumber.
\end{eqnarray}
\end{theorem}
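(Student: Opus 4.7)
The plan is to replicate, in the $L^2$-norm, the error-splitting underlying Section~\ref{sec:3}: write
\begin{equation*}
u(t)-u_h(t)=\eta(t)+\rho(t),\qquad \eta=u-\tilde u,\quad \rho=\tilde u-u_h,
\end{equation*}
and to control each piece separately. By construction $(u_h(t),p_h(t))$ is the MFE approximation to the steady Stokes problem (\ref{posth0})--(\ref{posth1}) at every $t$, so $\|\rho(t)\|_0\le \xi_{\rm vel}((u_h(t),p_h(t)),f(t),L^2)$. Differentiating that Stokes problem in $t$ produces the Stokes problem whose MFE approximation is $(\dot u_h,\dot p_h)$, hence $\|\dot\rho(t)\|_{-1}\le \xi_{\rm vel}((\dot u_h(t),\dot p_h(t)),f_t(t),H^{-1})$. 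The initial error is split as $\|\eta(0)\|_0\le \|u_0-u_h(0)\|_0+\xi_{\rm vel}((u_h(0),p_h(0)),f(0),L^2)$, supplying the first two terms of (\ref{eq:new}).

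The next step is to derive a linear evolution equation for $\eta$. Projecting (\ref{posth0}) onto $H$ yields $A\tilde u+\Pi F(u_h,u_h)=\Pi f-\Pi\dot u_h$; subtracting this from $u_t+Au+\Pi F(u,u)=\Pi f$ and using $\dot\eta=u_t-\tilde u_t$ gives
\begin{equation*}
\dot\eta+A\eta=-\dot\rho+\Pi\bigl(F(u_h,u_h)-F(u,u)\bigr),\qquad \eta(0)=u_0-\tilde u(0).
\end{equation*}
Duhamel's formula then represents $\eta(t)$ as $e^{-tA}\eta(0)$ plus two integrals, one driven by $\dot\rho$ and one by the nonlinear defect. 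For the $\dot\rho$ integral I would combine the smoothing estimate (\ref{lee1}) at $\alpha=1/2$ with the equivalence (\ref{A-1equiv}) to bound it by $Ct^{1/2}\max_{0\le s\le t}\|\dot\rho(s)\|_{-1}$, producing the last term of (\ref{eq:new}).

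For the nonlinear integral I would write $F(u_h,u_h)-F(u,u)=F(u_h,u_h-u)+F(u_h-u,u)$ and split $u_h-u=-(\eta+\rho)$. An $H^{-1}$-version of the first lemma of this section bounds the $\rho$-contribution by $C\|\rho(s)\|_0$ and the $\eta$-contribution by $C\|\eta(s)\|_0$. Paired with the $A^{1/2}e^{-(t-s)A}$ smoothing, the $\rho$-piece yields the term $Ct^{1/2}\max_{0\le s\le t}\xi_{\rm vel}((u_h,p_h),f,L^2)$, while the $\eta$-piece leaves on the right-hand side an integral $C\int_0^t(t-s)^{-1/2}\|\eta(s)\|_0\,ds$. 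The generalized Gronwall lemma \cite[pp.~188-189]{Henry} then closes the estimate for $\|\eta(t)\|_0$.

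Finally, the triangle inequality $\|u(t)-u_h(t)\|_0\le \|\eta(t)\|_0+\|\rho(t)\|_0$ supplies the missing standalone $\xi_{\rm vel}((u_h(t),p_h(t)),f(t),L^2)$ term. The main obstacle is the uniform-in-$h$ bound $\|\Pi(F(u_h,u_h)-F(u,u))\|_{-1}\le C\|u-u_h\|_0$; this is essentially the $j=1$ case of the estimate in the first lemma, and requires the boundedness of $\|u_h\|_{L^\infty}$ and $\|\nabla u_h\|_{L^{2d/(d-1)}}$, which in turn follows from the threshold condition (\ref{threshold}) being verified by $u_h$ thanks to the a priori bound (\ref{eq:err_vel(t)}).
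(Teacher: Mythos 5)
Your proposal is correct and follows essentially the same route as the paper: the decomposition $u-u_h=(u-\tilde u)+(\tilde u-u_h)$ of (\ref{decom_error}), the linear evolution equation for $\eta=u-\tilde u$ forced by $-\dot\rho$ and the nonlinear defect, Duhamel's formula with the smoothing bound (\ref{lee1}) at $\alpha=1/2$, the Lipschitz estimate (\ref{eq:aux_bat}) in the $H^{-1}$ norm, and the generalized Gronwall lemma, followed by the triangle inequality. The one point where you diverge is the nonlinear term: you bound $\|F(u_h,u_h)-F(u,u)\|_{-1}\le C\|u-u_h\|_0$ directly (using only the threshold condition for $u_h$, guaranteed by (\ref{eq:err_vel(t)})) and split $u-u_h=\eta+\rho$ afterwards, whereas the paper first adds and subtracts $F(\tilde u,\tilde u)$ and is therefore obliged to verify that $\|\tilde u\|_2$ is uniformly bounded via (\ref{eq:may06-nueva}) and (\ref{A_h_uh}); your shortcut is legitimate and reaches the same Gronwall inequality while skipping that technical step.
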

\begin{proof}
Let us denote by $\eta=u-\tilde u$. From (\ref{posth0})-(\ref{posth1}) it follows that
$$
\eta_t+A\eta+\Pi(F(u,u)-F(u_h,u_h))=\Pi(\dot u_h-\tilde u_t).
$$
Then $\eta$ satisfies the equation
\begin{eqnarray*}
&&\eta(t)=e^{-At}\eta(0)+\int_0^t e^{-A(t-s)}\Pi(F(\tilde u ,\tilde u)-F(u,u))~ds\nonumber\\
&&\quad + \int_0^t e^{-A(t-s)}\Pi(F(u_h,u_h)-F( \tilde u,\tilde u)~ds)+\int_0^t e^{-A(t-s)}\Pi(\dot u_h-\tilde u_t)~ds.
\end{eqnarray*}
Taking into account (\ref{lee1}) we get
\begin{eqnarray*}
\|\eta(t)\|_0\le \|\eta(0)\|_0+C\int_0^t\frac{\|A^{-1/2}\Pi(F(\tilde u,\tilde u)-F(u,u))\|_0}{\sqrt{t-s}}~ds&&
\nonumber\\
 +C\int_0^t \frac{\|A^{-1/2}\Pi(F(u_h,u_h)-F(\tilde u,\tilde u))\|_{0}}{\sqrt{t-s}}~ds
+C \int_0^t \frac{\|A^{-1/2}\Pi(\dot u_h-\tilde u_t)\|_0}{\sqrt{t-s}}~ds.&&
\end{eqnarray*}
We first observe that for any $v\in L^2(\Omega)^d$ we have $\|A^{-1/2}\Pi v\|_0\le C \|v\|_{-1}$. Then, taking into account
(\ref{eq:aux_bat}) we get
\begin{eqnarray*}
\|A^{-1/2}\Pi(F(\tilde u,\tilde u)-F(u,u))\|_0&\le& C\|\tilde u-u\|_0,\nonumber\\
\|A^{-1/2}\Pi(F( u_h, u_h)-F(\tilde u,\tilde u))\|_0&\le& C\|u_h-\tilde u \|_0.
\end{eqnarray*}
Let us observe that in order to apply (\ref{eq:aux_bat}) we require $u_h$ to satisfy (\ref{threshold}), which holds due to (\ref{eq:err_vel(t)}),
and $\|\tilde u\|_\infty$ and $\| \nabla \tilde u\|_{L^{2d/(d-1)}}$ to be bounded. Using (\ref{sob1}) both norms are
bounded in terms of $\|\tilde u\|_2$. Applying (\ref{eq:may06-nueva}) we get
\begin{eqnarray*}
\|\tilde u\|_2&\le& C\left(\|\dot u_h\|_0+\|u_h\cdot \nabla u_h\|_0\right)\nonumber\\
&\le&C\left(\|A_h u_h\|_0+\|\Pi_hF(u_h,u_h)\|_0 +\|\Pi_h f\|_0+\|u_h\cdot \nabla u_h\|_0\right).
\end{eqnarray*}
Finally, using that $\|A_h u_h\|_0$ is uniformly bounded, see
(\ref{A_h_uh}), and reasoning as in (\ref{eq:adelanto0}) to bound
the second and forth terms above we conclude $\|\tilde u\|_2$ is
uniformly bounded. Then, we arrive at
\begin{eqnarray*}
\|\eta(t)\|_0&\le& \|\eta(0)\|_0+C\int_0^t\frac{\|\eta(s)\|_0}{\sqrt{t-s}}~ds+C\int_0^t\frac{\|u_h(s)-\tilde u(s)\|_0}{\sqrt{t-s}}~ds
\nonumber\\
&&\quad + C\int_0^t\frac{\|\dot u_h(s)-\tilde u_s(s)\|_0}{\sqrt{t-s}}~ds.
\end{eqnarray*}
And then
\begin{eqnarray*}
\|\eta(t)\|_0&\le& \|\eta(0)\|_0+C\int_0^t\frac{\|\eta(s)\|_0}{\sqrt{t-s}}~ds+C t^{1/2}\max_{0\le s\le t}\xi_{\rm vel}((u_h,p_h),f,L^2)
\nonumber\\&&\quad+ C t^{1/2}\max_{0\le s\le t}\xi_{\rm vel}((\dot u_h,\dot p_h),f_s,L^2).
\end{eqnarray*}
A standard application of a generalized Gronwall lemma \cite{Henry} gives
\begin{eqnarray*}
\|\eta(t)\|_0&\le& C\|\eta(0)\|_0+C t^{1/2}\max_{0\le s\le t}\xi_{\rm vel}((u_h,p_h),f,L^2)\nonumber
\\&&\quad +C t^{1/2}\max_{0\le s\le t}\xi_{\rm vel}((\dot u_h,\dot p_h),f_s,L^2).
\end{eqnarray*}
Now, using decomposition (\ref{decom_error}) we conclude the proof.
\end{proof}
We observe that using the same proof, a similar bound for the $H^1(\Omega)^d$ norm of the error can be obtained changing only
$\xi_{\rm vel}(( u_h, p_h),f,L^2)$ by $\xi_{\rm vel}((u_h, p_h),f,H^1)$ and $\xi_{\rm vel}((\dot u_h,\dot p_h),f_t,H^{-1})$
by $\xi_{\rm vel}((\dot u_h,\dot p_h),f_t,L^2)$. Let us also remark that Theorem~\ref{the_new} allows to a posteriori obtain upper error bounds for the error in the approximation to the nonlinear Navier-Stokes equations using only upper
 error bounds for some Stokes problems depending only on the data and the computed approximation. However, the
estimation of the error at a time $t$ requires the estimation of the error of a family of Stokes problems with right hand
side depending on $\tau$, for all $\tau\in[0,t]$.

We now propose a simple procedure to estimate the error which is based on computing a MFE
 approximation
to the solution $(\tilde u(t^*),\tilde p(t^*))$ of
(\ref{posth0})-(\ref{posth1}) on a MFE space with better
approximation capabilities than $(X_{h,r},Q_{h,r-1})$ in which the
Galerkin approximation~$(u_h,p_h)$ is defined. This procedure was
applied to the $p$-version of the finite-element method for
evolutionary convection-reaction-diffusion equations in
\cite{Javier-Julia-estiman}. The main idea here is to use a second
approximation of different accuracy than that of the Galerkin
approximation of $(u,p)$ and whose computational cost hardly adds to
that of the Galerkin approximation itself.

Let us fix any time $t^*\in(0,T]$ and let approximate the solution $(\tilde u,\tilde p)$
of the Stokes problem (\ref{posth0})-(\ref{posth1})
by solving the following discrete Stokes problem:
find
$(\tilde{u}_{h}(t^*), \tilde{p}_{h}(t^*)) \in
\widetilde{X}\times\widetilde{Q}$ satisfying
\begin{eqnarray}\label{posth0dis}
     \left(\nabla\tilde{u}_{h}(t^*),
 \nabla \tilde{\phi}\right) +\left(\nabla \tilde{p}_{h}(t^*),
 \tilde{\phi} \right)
&=& \left(f, \tilde{\phi}\right) - \left(F(u_{h}(t^*),u_{h}(t^*)),\tilde{\phi}\right)\\\nonumber&& -
 \left( \dot u_{h}(t^*), \tilde{\phi}\right)  \quad \forall \, \tilde{\phi} \in \widetilde{X},\hspace*{-20pt}
  \\
\left( \nabla \cdot \tilde{u}_{h}(t^*),
\tilde{\psi} \right) &= & 0 \quad
\forall \, \tilde{\psi} \in \widetilde{Q}, \label{posth1dis}
\end{eqnarray}
where
$(\widetilde{X},\widetilde{Q})$ is either:
\begin{enumerate}
\item[(a)]
The same-order MFE over a finer grid. That is, for
$h'<h$, we choose
$(\widetilde{X},\widetilde{Q})=(X_{h',r}, Q_{h',r-1})$.
\item[(b)]
A higher-order MFE over the same grid. In this case we choose
$(\widetilde{X},$  $\widetilde{Q}) =(X_{h,r+1}, Q_{h,r})$.
\end{enumerate}
We now study the errors
$u-\tilde u_h$ and~$p-\tilde p_h$.

\begin{theorem}\label{teo4}
Let $(u,p)$ be  the solution  of {\rm (\ref{onetwo})--(\ref{ic})} and
for $r=2,3,4$,  and let {\rm(\ref{eq:u-inf})--(\ref{eq:u-int})} hold with $k=r+2$ Then,
there exists a positive constant~$C$ such that the postprocessed MFE
approximation to~$u$,
$\tilde{u}_{h}$ satisfies
the following bounds for $r=2,3,4$ and $t\in (0,T]$:

 (i)  if the postprocessing element is $(\widetilde{X},\widetilde{Q})=
(X_{h',r},Q_{h',r-1})$, then
\begin{align}
\|u(t)-\tilde{u}_{h}(t)\|_{j} &\le \frac{C}{t^{(r-2)/2}} (h')^{r-j}
 +
\frac{C}{t^{(r-1)/2}} h^{r+1-j}|\log{(h)}|^{r'},&&  j= 0,1 ,
\label{postcuad1}\\
\label{ulti1}
\|p(t)-\tilde{p}_{h}(t)\|_{L^2/{\mathbb R}}&\le
\frac{C}{t^{(r-2)/2}} (h')^{r-1}
+\frac{C}{t^{(r-1)/2}} h^{r}|\log{(h)}|^{r'},&&
\end{align}

 (ii) if the postprocessing element
is $(\widetilde{X},\widetilde{Q}) = (X_{h,r+1},Q_{h,r})$, then
\begin{align}
 \|u(t)-\tilde{u}_{h}(t)\|_{j} &\leq
 \frac{C}{t^{(r-1)/2}} h^{r+1-j}|\log{(h)}|^{r'},&& j=0,1,
 \label{postcuad2} \\
 \label{ulti2}
\|p(t)-\tilde{p}_{h}(t)\|_{L^2/{\mathbb R}}
       &\le \frac{C}{t^{(r-1)/2}} h^{r}|\log{(h)}|^{r'}.&&
\end{align}
For $r=2$ only the case $j=1$ in~(\ref{postcuad1}) and~(\ref{postcuad2}) holds.
In~(\ref{postcuad1})--(\ref{ulti2}), $r'=2$ when $r=2$ and $r'=1$ otherwise.
\end{theorem}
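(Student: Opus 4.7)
The plan is to use the triangle inequality
\begin{equation*}
u(t)-\tilde{u}_{h}(t)=\bigl(u(t)-\tilde{u}(t)\bigr)+\bigl(\tilde{u}(t)-\tilde{u}_{h}(t)\bigr),
\end{equation*}
together with its analogue for the pressure. Theorems~\ref{semi_disve} and~\ref{semi_dispre} already supply the terms of the form $h^{r+1-j}|\log(h)|^{r'}\,t^{-(r-1)/2}$ that appear on the right-hand sides of~(\ref{postcuad1})--(\ref{ulti2}), so the whole argument reduces to estimating the second summand $\tilde u(t)-\tilde u_h(t)$ and its pressure counterpart.

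The key observation, already exploited in the paper, is that $(\tilde u_h(t),\tilde p_h(t))$ is by construction the MFE approximation of $(\tilde u(t),\tilde p(t))$ in the enriched pair $(\widetilde X,\widetilde Q)$. Applying the standard Stokes estimates~(\ref{stokespro})--(\ref{stokespre}) on this pair---with $l=r$ on the finer mesh $h'$ in case~(a), and with $l=r+1$ on mesh $h$ in case~(b)---thus reduces the task to establishing the regularity bound
\begin{equation*}
\|\tilde u(t)\|_l+\|\tilde p(t)\|_{H^{l-1}/\mathbb R}\le \frac{C}{t^{(l-2)/2}}.
\end{equation*}
In case~(a) this supplies the first term $(h')^{r-j}t^{-(r-2)/2}$ of~(\ref{postcuad1})--(\ref{ulti1}); in case~(b) the resulting bound $h^{r+1-j}t^{-(r-1)/2}$ is absorbed by the (larger) contribution coming from Theorem~\ref{semi_disve}.

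The regularity bound itself would be obtained by invoking the elliptic Stokes regularity~(\ref{eq:may06-nueva}) and its pressure counterpart, which control $\|\tilde u\|_l+\|\tilde p\|_{H^{l-1}/\mathbb R}$ in terms of $\|g\|_{l-2}$, where $g=f-\dot u_h-F(u_h,u_h)$ is the right-hand side of~(\ref{posth0}). Using the continuous momentum equation to rewrite
\begin{equation*}
g=(-\Delta u+\nabla p)+(u_t-\dot u_h)+(F(u,u)-F(u_h,u_h)),
\end{equation*}
the first summand has $H^{l-2}$-norm of order $t^{-(l-2)/2}$ thanks to~(\ref{eq:u-inf}) with $k=l$, while the remaining two are Galerkin-type error terms that can be controlled through~(\ref{eq:err_vel(t)}) and its time-derivative analogue (together with~(\ref{eq:aux_bat}) for the nonlinear difference).

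The main technical hurdle is the estimation of $\|u_t-\dot u_h\|_{l-2}$ in norms stronger than the $H^{-1}$ norm furnished by Lemma~\ref{le:z_t}: for $l=r\ge 3$ in case~(a), and for $l=r+1$ in case~(b), bounds in $H^s$ with $s\ge 0$ are required. These would be obtained either by rerunning the argument of Lemma~\ref{le:z_t} in the relevant weighted norm, or by combining the bound of Lemma~\ref{le:e_t} with an inverse inequality on the discrete residual $\Pi_h u_t-\dot u_h$. Finally, the restriction to $j=1$ when $r=2$ is inherited from Theorem~\ref{semi_disve}, which provides no improved $L^2$-estimate for the mini-element and therefore cannot be fed into the above decomposition at $j=0$.
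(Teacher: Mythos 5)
Your first step (splitting off $u-\tilde u$ and invoking Theorems~\ref{semi_disve} and~\ref{semi_dispre}) is fine, but the treatment of the remaining piece $\tilde u-\tilde u_h$ contains a genuine gap: the regularity bound $\|\tilde u(t)\|_l+\|\tilde p(t)\|_{H^{l-1}/\mathbb R}\le C\,t^{-(l-2)/2}$ that your argument needs for $l=r$ (case (a)) and $l=r+1$ (case (b)) is false for $l\ge 3$. The datum of the Stokes problem (\ref{posth0})--(\ref{posth1}) is $g=f-\dot u_h-F(u_h,u_h)$, and $F(u_h,u_h)$ involves $\nabla u_h$, which jumps across interelement boundaries; likewise $\dot u_h$ is only globally $H^1$. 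Hence $g\in L^2(\Omega)^d$ but $g\notin H^{1}(\Omega)^d$ uniformly in $h$, and elliptic regularity (\ref{eq:may06-nueva}) can only deliver $\tilde u\in H^2$. This is not a matter of estimating $\|u_t-\dot u_h\|_{l-2}$ more carefully (the ``technical hurdle'' you flag): no weighted rerun of Lemma~\ref{le:z_t} or inverse inequality will make a piecewise polynomial globally $H^{l-2}$-smooth. Consequently (\ref{stokespro})--(\ref{stokespre}) applied to $(\tilde u,\tilde p)$ on the enriched pair can give at best $O((h')^{2-j})$, which is insufficient for $r=3,4$ and for case (b) even when $r=2$.

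The paper avoids this by never measuring $\tilde u_h$ against $\tilde u$. It decomposes $u-\tilde u_h=(u-s_{h'})+(s_{h'}-\tilde u_h)$, where $s_{h'}$ is the Stokes projection of the \emph{exact} solution $u$ onto the enriched space (problem (\ref{July-24th11})--(\ref{July-24th12})). The first term is controlled by (\ref{stokespro})--(\ref{stokespre}) using the regularity of $u$ itself, available from (\ref{eq:u-inf}) with $k=r+2$, which yields the $(h')^{r-j}t^{-(r-2)/2}$ (resp.\ $h^{r+1-j}$) contribution. The second term is the difference of two discrete Stokes solutions in the \emph{same} space whose right-hand sides differ by $(u_t-\dot u_h)+(F(u,u)-F(u_h,u_h))$; testing with $\tilde e_h=s_{h'}-\tilde u_h$ bounds it by the $H^{-1}$ norms of these residuals, which Lemma~\ref{le:z_t}, (\ref{eq:aux_bat}) and (\ref{eq:err_vel(t)}) control, and the pressure follows from the inf-sup condition (\ref{inf-sup}). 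If you want to salvage your route you must replace the comparison function $\tilde u$ by a sufficiently smooth surrogate such as $s_{h'}$; as written, the central regularity claim cannot be established.
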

\begin{proof} The cases $r=3,4$ have been proven in Theorems~5.2 and~5.3
in~\cite{jbj_regularity}. Following the same arguments,
we now prove the results corresponding to $r=2$ and $(\widetilde{X},\widetilde{Q})=
(X_{h',r},Q_{h',r-1})$, the case $(\widetilde{X},\widetilde{Q})=
(X_{h,r+1},Q_{h,r})$ being similar, yet easier.
We decompose the error $u-\tilde u_h=(u-s_{h'}) + (s_{h'}-\tilde u_h)$, where
 $(s_{h'},q_{h'})\in X_{h',2}\times Q_{h',1}$ is the solution of
\begin{eqnarray}
\label{July-24th11}
 \left(\nabla s_{h'},
 \nabla {\phi}_{h'}\right)-\left( {q}_{h'},
 \nabla\cdot {\phi}_{h'} \right)
&=& \left(f-F(u,u)- u_t, {\phi}_{h'}\right) \ \forall \, {\phi}_{h'} \in {X_{h',2}},
  \\
\left( \nabla \cdot s_{h'},
{\psi}_{h'} \right) &= & 0 \quad
\forall \, {\psi}_{h'} \in Q_{h',1},
\label{July-24th12}
\end{eqnarray}
that is, $s_{h'}$ is the Stokes projection of~$u$ onto $V_{h'}$.
Since in view of~(\ref{stokespro})--(\ref{stokespre}) we have
$$
\left\| u-s_{h'}\right\|_1+\left\|p-q_{h'}\right\|_{L^2/{\mathbb R}}\le CM_{2}h',
$$
we only have to estimate $s_{h'}-\tilde u_{h}$ and $q_{h'}-\tilde p_h$. To do this,
we subtract~(\ref{posth0dis}) from~(\ref{July-24th11}),
 and take inner product with
$\tilde e_h=s_{h'}-\tilde u_h$ to get
$$
\left\|\nabla \tilde e_h\right\|_0^2\le
\bigl(\left\| u_t-\dot u_h\right\|_{-1} +\left\|F(u_h,u_h)-F(u,u)\right\|_{-1}\bigr)
\left\| \tilde e_h\right\|_1.
$$
Now applying~Lemma~\ref{le:z_t}, (\ref{eq:aux_bat}) and~(\ref{eq:err_vel(t)}) the proof
of~(\ref{postcuad1}) is finished.

To prove~(\ref{ulti1}), again we subtract~(\ref{posth0dis}) from (\ref{July-24th11}), rearrange
terms and apply the inf-sup condition~(\ref{inf-sup}) to get
$$
\beta\|q_{h'}-\tilde{p}_{h}\|_{L^2/{\mathbb R}}  \le
\left\|\nabla \tilde e_h\right\|_0 +
\left\| u_t-\dot u_h\right\|_{-1} +\left\|F(u_h,u_h)-F(u,u)\right\|_{-1}
$$
and the proof is finished with the same arguments used to
prove~(\ref{postcuad1}).\hfill \end{proof}

To estimate the error in $(u_h(t^*),p_h(t^*))$ we propose to take the difference between the postprocessed and the Galerkin approximations:
$$
\tilde \eta_{h,{\rm vel}}(t^*)=\tilde u_h(t^*)-u_h(t^*),\quad
\tilde \eta_{h,{\rm pres}}(t^*)=\tilde p_h(t^*)-p_h(t^*).
$$
In the following theorem we prove that this error estimator is efficient and asymptotically exact both in the
$L^2(\Omega)^d$ and $H^1(\Omega)^d$ norms and
it has the advantage of providing an improved approximation when added to the Galerkin MFE approximation.

\begin{theorem}\label{th_pos_esti}
Let $(u,p)$ be the solution of (\ref{onetwo})-(\ref{ic}) and
fix any positive time ${t^*}>0$. Assume that condition (\ref{saturacion}) is satisfied.
Then, there exist positive constants $h_0$, $\gamma_0<1$,
and  $C_1$, $C_2$,
$C_3$ and~$C_4$ such that, for $h<h_0$ and $0<\gamma<
\gamma_0$,
the error estimators $\tilde \eta_{h,{\rm vel}}(t^*)$
$\tilde \eta_{h,{\rm pres}}(t^*)$ satisfy the following bounds
when $(\widetilde X,\widetilde Q)=(X_{h',r},Q_{h',r-1})$ and $h'<\gamma h$:
\begin{equation}\label{efi_posdis}
C_1\le \frac{\|\tilde \eta_{h,{\rm vel}}(t^*)\|_j}{\|(u-u_h)(t^*)\|_j}\le C_2,\ j=0,1,\quad
C_3\le \frac{\|\tilde \eta_{h,{\rm pres}}(t^*)\|_{L^2/{\Bbb R}}}{\|(p-p_h)(t^*)\|_{L^2/{\Bbb R}}}\le C_4.
\end{equation}
Furthermore, if $(\widetilde X,\widetilde Q)=(X_{h',r},Q_{h',r-1})$, with $h'=h^{1+\epsilon}$, $\epsilon>0$, or
$(\widetilde X,\widetilde Q)=(X_{h,r+1},Q_{h,r})$ then
\begin{equation}\label{asin_posdis}
\lim_{h\rightarrow 0}\frac{\|\tilde \eta_{h,{\rm vel}}(t^*)\|_j}{\|(u-u_h)(t^*)\|_j}=1,\ j=0,1,\quad
\lim_{h\rightarrow 0}\frac{\|\tilde \eta_{h,{\rm pres}}(t^*)\|_{L^2/{\Bbb R}}}{\|(p-p_h)(t^*)\|_{L^2/{\Bbb R}}}=1.
\end{equation}
For the mini element, the case $j=0$ in~(\ref{efi_posdis}) and~(\ref{asin_posdis}) must be
excluded.
\end{theorem}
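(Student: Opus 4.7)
The plan is to follow the scheme of Theorem~\ref{apos_semi1}, replacing the (non-computable) postprocessed solution $(\tilde u,\tilde p)$ by its computable approximation $(\tilde u_h,\tilde p_h)$, and using Theorem~\ref{teo4} in place of Theorems~\ref{semi_disve} and~\ref{semi_dispre}. The key observation is that if $(\tilde u_h,\tilde p_h)$ approximates $(u,p)$ to a strictly better order than $(u_h,p_h)$ does, then the computable difference $(\tilde u_h-u_h,\tilde p_h-p_h)$ is a reliable proxy for the Galerkin error.

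For the efficiency part~(\ref{efi_posdis}), I would use the triangle inequality in both directions,
\begin{equation*}
\bigl|\,\|\tilde u_h(t^*)-u_h(t^*)\|_j-\|u(t^*)-u_h(t^*)\|_j\,\bigr|\le \|u(t^*)-\tilde u_h(t^*)\|_j,
\end{equation*}
so that, dividing by $\|(u-u_h)(t^*)\|_j$, the task reduces to controlling the ratio $\|u(t^*)-\tilde u_h(t^*)\|_j/\|(u-u_h)(t^*)\|_j$. For option~(a), Theorem~\ref{teo4} gives $\|u(t^*)-\tilde u_h(t^*)\|_j\le C(t^*)\bigl((h')^{r-j}+h^{r+1-j}|\log h|^{r'}\bigr)$, while the saturation hypothesis~(\ref{saturacion}) provides the lower bound $\|(u-u_h)(t^*)\|_j\ge C_r h^{r-j}$. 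Under $h'<\gamma h$ this yields a bound of the form $(C(t^*)/C_r)\bigl(\gamma^{r-j}+h|\log h|^{r'}\bigr)$. I would pick $\gamma_0$ small enough and $h_0$ small enough to make this ratio at most $1/2$, and then deduce~(\ref{efi_posdis}) with $C_1=1/2$, $C_2=3/2$.

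For the asymptotic exactness part~(\ref{asin_posdis}), the same decomposition shows it suffices to verify that $\|u(t^*)-\tilde u_h(t^*)\|_j/\|(u-u_h)(t^*)\|_j\to 0$. In case~(a) with $h'=h^{1+\epsilon}$, the leading term from Theorem~\ref{teo4} is of order $(h')^{r-j}=h^{(1+\epsilon)(r-j)}$, which is $o(h^{r-j})$; in case~(b) Theorem~\ref{teo4} yields directly $\|u-\tilde u_h\|_j=O(h^{r+1-j}|\log h|^{r'})=o(h^{r-j})$. Combined with~(\ref{saturacion}), the ratio tends to zero, and hence $\|\tilde u_h-u_h\|_j/\|u-u_h\|_j\to 1$ as required.

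The pressure estimators are handled by the same scheme using the pressure bounds of Theorem~\ref{teo4} together with an analog of~(\ref{saturacion}) for $\|(p-p_h)(t^*)\|_{L^2/\mathbb{R}}$ of order $h^{r-1}$, either assumed or obtained from~(\ref{saturacion}) via the inf-sup condition~(\ref{inf-sup}). The mini-element case is excluded when $j=0$ exactly as in Theorem~\ref{teo4}. The main obstacle I anticipate is the careful calibration of the thresholds $\gamma_0$ and $h_0$ in the efficiency step: the constants in Theorem~\ref{teo4} degrade as $t^*\to 0$ and the saturation constant $C_r$ in~(\ref{saturacion}) is itself $t^*$-dependent, so the admissible values of $\gamma$ and $h$ must be tuned to $t^*$, and one must ensure that the two competing contributions $(h')^{r-j}$ and $h^{r+1-j}|\log h|^{r'}$ are simultaneously dominated by the saturation lower bound.
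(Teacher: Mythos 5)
Your proposal is correct and follows essentially the same route as the paper: the two-sided triangle inequality reducing everything to the ratio $\|u(t^*)-\tilde u_h(t^*)\|_j/\|(u-u_h)(t^*)\|_j$, controlled via Theorem~\ref{teo4} and the saturation hypothesis~(\ref{saturacion}), with $\gamma_0$ and $h_0$ chosen to make the bound~(\ref{July-24th}) at most $1/2$. Your explicit remark that the pressure case needs a saturation bound of order $h^{r-1}$ (which the paper leaves implicit under ``obvious changes'') is a fair and accurate observation.
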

\begin{proof}
We will prove the estimates for the velocity in the case $r=3,4$,
since the estimates for the pressure and the case $r=2$ are obtained by
similar arguments but with obvious changes.
Let us observe that for $j=0,1$
\begin{eqnarray*}
\|u(t^*)-u_h(t^*)\|_j&\le& \|\tilde \eta_{h,{\rm vel}}(t^*)\|_j+\|\tilde u_h(t^*)-u(t^*)\|_j\nonumber\\
&\le&\|\tilde \eta_{h,{\rm vel}}(t^*)\|_j+\frac{C}{(t^*)^{(r-2)/2}}(h')^{r-j}\nonumber\\
&&\quad+\frac{C}{(t^*)^{(r-1)/2}}h^{r+1-j}|\log(h)|.
\end{eqnarray*}
On the other hand
\begin{eqnarray*}
\|\tilde \eta_{h,{\rm vel}}(t^*)\|_j&\le&\|u(t^*)-u_h(t^*)\|_j+\|\tilde u_h(t^*)-u(t^*)\|_j\nonumber\\
&\le&\|u(t^*)-u_h(t^*)\|_j+\frac{C}{(t^*)^{(r-2)/2}}(h')^{r-j}\nonumber\\
&&\quad+\frac{C}{(t^*)^{(r-1)/2}}h^{r+1-j}|\log(h)|.
\end{eqnarray*}
Using (\ref{saturacion}) we get
\begin{equation}\label{July-24th}
\left| \frac{\|\tilde \eta_{h,{\rm vel}}(t^*)\|_j}{\|(u-u_h)(t^*)\|_j}-1\right|\le\frac{C}{C_r}\left((t^*)^{-(r-2)/2}\left(\frac{h'}{h}\right)^{r-j}+(t^*)^{-(r-1)/2} |\log(h)| h\right).
\end{equation}
Taking $h'\le \gamma h$ and $h$ and~$\gamma$ sufficiently small, the bound (\ref{efi_posdis}) is readily obtained. The proof of (\ref{asin_posdis}) follows
straightforwardly from~(\ref{July-24th}), since in the case when
 $(\widetilde X,\widetilde Q)=(X_{h',r},Q_{h',r-1})$ with $h'=h^{1+\epsilon}$, $\epsilon>0$,
the term $(h'/h)^{r-j}\rightarrow 0$ when $h$ tends to zero, and in the case when
$(\widetilde X,\widetilde Q)=(X_{h,r+1},Q_{h,r})$ the term containing
the parameter $h'$ is not present.
\end{proof}

\section{A posteriori error estimations. Fully discrete case}
\label{sec:4}
In practice, it is not possible to compute the MFE approximation exactly, and, instead, some time-stepping procedure must be used
to approximate the solution of (\ref{ten})-(\ref{ten2}).
 Hence, for some time levels
$0=t_0<t_1<\cdots<t_N=T$, approximations $U_h^n\approx u_h(t_n)$ and
$P_h^n\approx p_h(t_n)$ are obtained. In this section we assume that
the approximations are obtained with the backward Euler method or
the two-step BDF which we now describe. For simplicity, we consider only
constant stepsizes, that is, for $N\ge 2$ integer, we fix
$k=T/N$, and we denote  $t_n=nk$, $n=0,1,\ldots,N$. For a sequence
$(y^n)_{n=0}^N$ we denote
$$
Dy^n=y^n-y^{n-1},\quad n=1,2\ldots,N.
$$
Given $U_h^{0}=u_h(0)$, a sequence $(U_h^n,P_h^n)$ of approximations to~$(u_h(t_n),p_h(t_n))$,
$n=1,\ldots N$, is obtained by means of the following recurrence relation:
\begin{align}\label{tend}
\left(d_tU_{h}^n, \phi_{h}\right)
&+     \left( \nabla U_{h}^n, \nabla \phi_{h}\right)\\
 &+
b\left(U_{h}^n, U_{h}^n, \phi_{h}\right) - \left( P_{h}^n, \nabla \cdot\phi_{h}\right) = (f,
\phi_{h}) \quad
\forall  \phi_{h} \in X_{h,r},\nonumber\\
&\left(\nabla \cdot U_{h}^n, \psi_{h}\right) = 0, \quad\forall \, \psi_{h} \in Q_{h,r-1},
\label{tend2}
\end{align}
where $d_t = k^{-1}D$ in the case\vspace*{-2pt} of the backward Euler method and
$d_t=k^{-1}(D+\frac{1}{2}D^2)$ for the two-step BDF. In\vspace*{-2pt} this last case, a second
starting value $U_h^{1}$ is needed. Here, we will always assume that $U_h^{1}$ is obtained by one
step of the backward Euler method.\vspace*{-1pt} Also, for both the backward Euler and the two-step
BDF, we assume that~$U_h^{0}=u_h(0)$, which is usually the case in practical situations.

We now define the time-discrete postprocessed approximation. Given
an approximation $d_t^* U_h^n$ to $\dot u_h(t_n)$,
 the time-discrete postprocessed velocity and pressure $(\widetilde U^n,\widetilde P^n)$  are defined as the solution of the following
Stokes problem:
\begin{align}\label{posth0n}
     \left(\nabla\tilde{U}^n,
 \nabla {\phi} \right) +\left(\nabla \tilde{P}^n,
 {\phi} \right)
&= \left(f, {\phi}\right) - b\left(U_{h}^n,U_{h}^n,{\phi}\right) -
 \left( d_t^*U_{h}^n, {\phi}\right),
 \ \forall \, {\phi} \in H_0^1(\Omega)^d,
  \\
\left( \nabla \cdot \tilde{U}^n, {\psi} \right) &=  0, \ \forall \, {\psi} \in L^2(\Omega)/{\Bbb
R}. \label{posth1n}
\end{align}
For reasons already analyzed in~\cite{Javier-Julia-estiman-3} and \cite{jbj_fully}  we define
\begin{equation}
\label{dtu} d_t^* U_h^n=\Pi_h f -A_hU_h^n- \Pi_h F\left(U_h^n,U_h^n\right)
\end{equation}
as an adequate approximation to the time derivative $\dot u_h(t_n)$.

For the analysis of the errors $u(t)-\tilde U^n$ and $p(t)-\tilde P^n$ we follow~\cite{jbj_fully},
where the MFE approximations to the Stokes problem~(\ref{posth0n})--(\ref{posth1n}) are analyzed.
We start by decomposing the errors $u(t)-\tilde U^n$ and $p(t)-\tilde P^n$ as follows,
\begin{eqnarray}
\label{eq:decomtilde} u(t_n)-\tilde U^n&= &(u(t)-\tilde u(t_n))+\tilde e^n,
\\
\label{eq:decomtildeP} p(t_n)-\tilde P^n&=&(p(t_n)-\tilde p(t_n))+\tilde \pi^n,
\end{eqnarray}
where $\tilde e^n=\tilde u(t_n)-\tilde U^n$ and $\tilde \pi^n=\tilde p(t_n)-\tilde P^n$ are the
temporal errors of the time-discrete postprocessed velocity and pressure $(\tilde U^n, \tilde
P^n)$. The first terms on the right-hand sides of~(\ref{eq:decomtilde})--(\ref{eq:decomtildeP}) are
the errors of the postprocessed approximation that were studied in the previous section.

Let us denote by $e_h^n=u_h(t_n)-U_h^n$, the temporal error of the
MFE approximation to the velocity, and by $\pi_h^n=p_h(t_n)-P_h^n$,
the temporal error of the MFE approximation to the pressure. In the
present section we bound $(\tilde e^n-e_h^n)$ and $(\tilde
\pi^n-\pi_h^n)$ in terms of $e_h^n$.

The error bounds in the following lemma are similar to
         those of~\cite[Proposition~3.1]{jbj_fully} where
         error estimates for  MFE approximations of the Stokes
         problem~(\ref{posth0n})--(\ref{posth1n}) are obtained.
\begin{lemma}\label{lema_need}There exists a positive constant $C=C(\max_{0\le t\le T}\|A_h u_h(t)\|_0)$ such that
\begin{eqnarray}
\qquad\quad\|\tilde e^n-e_h^n\|_j &\le& C h^{2-j}\bigl ( \|e_h^n\|_1+\|e_h^n\|_1^3 +
\|A_he_h^n\|_0\bigr ),\  j=0,1,\quad 1\le n\le N.
\label{error_post_fully}\\
\|\tilde  \pi^n-\pi_h^n\|_{L^2/{\Bbb R}}&\le& C h\bigl ( \|e_h^n\|_1+\|e_h^n\|_1^3 +
\|A_he_h^n\|_0\bigr ),\quad 1\le n\le N. \label{error_post_fully_pre}
\end{eqnarray}
\end{lemma}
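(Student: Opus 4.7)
The plan is to exploit the observation emphasized earlier in the paper that $(U_h^n,P_h^n)$ is itself the MFE approximation to the steady Stokes problem~(\ref{posth0n})--(\ref{posth1n}), just as $(u_h(t_n),p_h(t_n))$ is the MFE approximation to~(\ref{posth0})--(\ref{posth1}). Subtracting the two continuous Stokes problems, $(\tilde e^n,\tilde\pi^n)$ solves a Stokes problem in~$\Omega$ with right-hand side
\[
g \;=\; -\bigl[F(u_h(t_n),u_h(t_n))-F(U_h^n,U_h^n)\bigr] \;-\; \bigl[\dot u_h(t_n)-d_t^*U_h^n\bigr],
\]
and, by linearity, $(e_h^n,\pi_h^n)$ is precisely its MFE Galerkin approximation. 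The identity $(d_tU_h^n,\phi_h)=(d_t^*U_h^n,\phi_h)$ for $\phi_h\in X_{h,r}$ needed for this follows by testing~(\ref{tend}) with $\Pi_h\phi_h\in V_h$ and using that both time derivatives lie in~$V_h$.

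With this in hand, I would combine the MFE Stokes error bounds~(\ref{stokespro})--(\ref{stokespre}) with $l=2$ and the regularity estimates~(\ref{eq:may06-nueva}) and~(\ref{preHR}), which give $\|\tilde e^n\|_2+\|\tilde\pi^n\|_{H^1/\mathbb R}\le C\|g\|_0$. This yields $\|\tilde e^n-e_h^n\|_0+h\|\tilde e^n-e_h^n\|_1\le Ch^2\|g\|_0$ and $\|\tilde\pi^n-\pi_h^n\|_{L^2/\mathbb R}\le Ch\|g\|_0$, so the lemma reduces to establishing
\[
\|g\|_0 \;\le\; C\bigl(\|e_h^n\|_1+\|e_h^n\|_1^3+\|A_he_h^n\|_0\bigr).
\]

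To attack this, I would first simplify~$g$ using the Galerkin dynamics. Testing~(\ref{ten}) against $\phi_h\in V_h$ yields $\dot u_h=\Pi_hf-A_hu_h-\Pi_hF(u_h,u_h)$, which combined with the definition~(\ref{dtu}) produces
\[
\dot u_h(t_n)-d_t^*U_h^n \;=\; -A_he_h^n-\Pi_h\bigl[F(u_h(t_n),u_h(t_n))-F(U_h^n,U_h^n)\bigr].
\]
Substituting back shows $g = A_he_h^n-(I-\Pi_h)[F(u_h(t_n),u_h(t_n))-F(U_h^n,U_h^n)]$, so by the $L^2$-boundedness of $I-\Pi_h$,
\[
\|g\|_0 \;\le\; \|A_he_h^n\|_0 \;+\; \|F(u_h(t_n),u_h(t_n))-F(U_h^n,U_h^n)\|_0.
\]

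The hard part is controlling this nonlinear difference, which I would decompose via $U_h^n=u_h(t_n)-e_h^n$ and the bilinearity of~$F$ as
\[
F(u_h,u_h)-F(U_h^n,U_h^n)\;=\;F(u_h,e_h^n)+F(e_h^n,u_h)-F(e_h^n,e_h^n).
\]
The cross terms are controlled in~$L^2$ by duality through~(\ref{eq:adelanto1}) with $w_h=u_h(t_n)$, and the uniform bound~(\ref{A_h_uh}) on $\|A_hu_h\|_0$ then gives $\|F(u_h,e_h^n)+F(e_h^n,u_h)\|_0\le C\|e_h^n\|_1$. The genuinely quadratic term~$F(e_h^n,e_h^n)$ is the main obstacle; I would handle it via~(\ref{eq:adelanto0}) with $v_h=e_h^n$ to obtain $\|F(e_h^n,e_h^n)\|_0\le c\|e_h^n\|_1^{3/2}\|A_he_h^n\|_0^{1/2}$, and then absorb the mixed product via Young's inequality as $\tfrac12(\|e_h^n\|_1^3+\|A_he_h^n\|_0)$. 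Assembling these estimates produces the required bound on~$\|g\|_0$ and finishes the proof.
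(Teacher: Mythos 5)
Your proposal is correct and follows essentially the same route as the paper's proof: identify $(e_h^n,\pi_h^n)$ as the MFE approximation to the Stokes problem solved by $(\tilde e^n,\tilde\pi^n)$, reduce everything to an $L^2$ bound on the right-hand-side data via (\ref{stokespro+1}), (\ref{stokespre}) and (\ref{preHR}), and control the nonlinear difference through the decomposition $F(e_h^n,u_h)+F(u_h,e_h^n)-F(e_h^n,e_h^n)$ together with (\ref{eq:adelanto0})--(\ref{eq:adelanto1}) and (\ref{A_h_uh}). The only (harmless) cosmetic differences are that you cancel the projected nonlinear terms to leave $(I-\Pi_h)$ acting on the nonlinear difference and that you make the Young-inequality step explicit.
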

\begin{proof}
Let us denote by $l=g+(d_t^* U_h^n-\dot u_h(t_n))$ where $g=F(U_h^n,U_h^n)-F(u_h(t_n),u_h(t_n))$.
Subtracting~(\ref{posth0n})--(\ref{posth1n}) from~(\ref{posth0})--(\ref{posth1}) we have that the
temporal errors~$(\tilde e^n,\tilde \pi^n)$ of the time-discrete postprocessed velocity and pressure are the
solution of the following Stokes problem
\begin{eqnarray}\label{otro_stokes}
(\nabla \tilde e^n,\nabla \phi)+(\nabla \tilde \pi^n,\phi)&=&(l,\phi),\quad\forall \phi\in
H_0^1(\Omega)^d,
\\
(\nabla\cdot\tilde e^n,\psi)&=&0,\quad\forall \psi\in L^2(\Omega)/{\Bbb R}.\label{otro_stokes2}
\end{eqnarray}
On the other hand, subtracting (\ref{tend})-(\ref{tend2}) from (\ref{ten})-(\ref{ten2}) and taking
into account that, thanks to definition (\ref{dtu}) $d_t U_h^n=d_t^* U_h^n$, we get that the
temporal errors~$(e_h^n,\pi_h^n)$ of the fully discrete MFE approximation satisfy
\begin{align*}
(\nabla  e_h^n,\nabla \phi_h)+(\nabla \pi_h^n,\phi_h)&=(l,\phi_h),\quad\forall \phi_h\in X_{h,r},
\\
(\nabla\cdot e_h^n,\psi_h)&=0,\quad\forall \psi_h\in Q_{h,r-1},
\end{align*}
and thus $(e_h^n,\pi_h^n)$ is the MFE approximation to the solution $(\tilde e^n,\tilde \pi^n)$ of
(\ref{otro_stokes})--(\ref{otro_stokes2}). Using then (\ref{stokespro+1}) we get
$$
\|\tilde e^n-e_h^n\|_j\le C h^{2-j}\|l\|_0.
$$
For the pressure we apply (\ref{stokespre}) and (\ref{preHR}) to obtain
$$
\|\tilde \pi^n-\pi_h^n\|_{L^2/{\Bbb R}}\le C h \|\tilde \pi^n\|_{H^1/{\Bbb R}}\le C h\|l\|_0.
$$
Then, to conclude, it only remains to bound $\|l\|_0$. From the definition of $d^*_t U_h^n$ it is
easy to see that
$$
d_t^* U_h^n- \dot u_h(t_n)=A_he_h^n
-\Pi_h\left(F\left(U_h^n,U_h^n\right)-F(u_h(t_n),u_h(t_n))\right),
$$
so that
$$
\|d_t^* U_h^n- \dot u_h(t_n)\|_0\le \|A_he_h^n\|_0+\|g\|_0.
$$
Now, by writing~$g$ as
$$
g=F(e_h^n,u_h(t_n))+F(u_h(t_n),e_h^n)-F(e_h^n,e_h^n),
$$
and using (\ref{eq:adelanto0})--(\ref{eq:adelanto1}) we get
$$
 \|g\|_0\le
\left (\|A_hu_h(t_n) \|_0 \|e_h^n\|_1+
\|e_h^n\|_1^{3/2}\|A_he_h^n\|_0^{1/2}\right),
$$
from which we finally conclude (\ref{error_post_fully}) and (\ref{error_post_fully_pre}).
\hfill
\end{proof}

Let us consider the quantities $\widetilde U^n-U_h^n$ and
$\widetilde P^n-P_h^n$ as a posteriori indicators of the error in
the fully discrete approximations to the velocity and pressure
respectively. Then, we obtain the following result:
\begin{theorem}\label{th_esti_time_d}
Let $(u,p)$ be the solution of (\ref{onetwo})--(\ref{ic}) and let
(\ref{tildeM2}) hold. Assume that the fully discrete MFE
approximations $(U_h^n,P_h^n)$, $n=0\ldots,N=T/k$ are obtained by
the backward Euler method or the two-step BDF
(\ref{tend})--(\ref{tend2}), and let $(\widetilde U^n,\widetilde
P^n)$ be the solution of~(\ref{posth0n})--(\ref{posth1n}). Then, for
$n=1,\ldots,N$,
\begin{eqnarray}\label{decom_prin2}
\|\widetilde U^{n}-U_h^n\|_j&\le& \|\tilde u(t_n)-u_h(t_n)\|_j+
C'_{l_0} h^{2-j}\frac{k^{l_0}}{t_n^{l_0}},\qquad j=0,1,
\\
\label{decom_prin2p}
\|\widetilde P^{n}-P_h^n\|_{L^2/{\Bbb R}}&\le& \|\tilde p(t_n)-p_h(t_n)\|_{L^2/{\Bbb
R}}+C'_{l_0}
h\frac{k^{l_0}}{t_n^{l_0}},
\end{eqnarray}
where $C'_{l_0}$ is the constant in~(\ref{pri_fully})--(\ref{pri_fully_pres}),
$l_0=1$ for the backward Euler method and $l_0=2$ for the two-step BDF.
\end{theorem}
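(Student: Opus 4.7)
The plan is to decompose $\widetilde{U}^n-U_h^n$ so that the fully discrete quantity splits into the semi-discrete postprocessing difference (already available from the previous section) plus a purely temporal piece. Using the definitions $\tilde e^n=\tilde u(t_n)-\widetilde{U}^n$ and $e_h^n=u_h(t_n)-U_h^n$, I would write
\begin{equation*}
\widetilde{U}^n-U_h^n \;=\; \bigl(\tilde u(t_n)-u_h(t_n)\bigr) \,-\, \bigl(\tilde e^n-e_h^n\bigr),
\end{equation*}
and identically for the pressure with $\tilde \pi^n$ and $\pi_h^n$ in place of $\tilde e^n$ and $e_h^n$. The triangle inequality then reduces the problem to controlling $\|\tilde e^n-e_h^n\|_j$ and $\|\tilde \pi^n-\pi_h^n\|_{L^2/\mathbb R}$, which is precisely the content of Lemma~\ref{lema_need}.

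Next, I would invoke Lemma~\ref{lema_need} to obtain
\begin{equation*}
\|\tilde e^n-e_h^n\|_j+h^{j-1}\|\tilde\pi^n-\pi_h^n\|_{L^2/\mathbb R}\;\le\; C\,h^{2-j}\bigl(\|e_h^n\|_1+\|e_h^n\|_1^3+\|A_h e_h^n\|_0\bigr).
\end{equation*}
At this point the remaining ingredient is a sharp bound on the purely temporal errors $\|e_h^n\|_1$ and $\|A_h e_h^n\|_0$ for the backward Euler ($l_0=1$) and two-step BDF ($l_0=2$) schemes. These are exactly the estimates referenced in the statement as (pri_fully) and (pri_fully_pres) (borrowed from \cite{jbj_fully}); they yield $\|e_h^n\|_1+\|A_h e_h^n\|_0\le C\,k^{l_0}/t_n^{l_0}$ with a constant depending only on the data and on $\max_{0\le t\le T}\|A_hu_h(t)\|_0$, which is finite thanks to (\ref{A_h_uh}). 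Combining all of the above yields (\ref{decom_prin2}) and (\ref{decom_prin2p}) with the constant $C'_{l_0}$ inherited from those temporal bounds.

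The step I expect to be the only real obstacle is the cubic term $\|e_h^n\|_1^3$. Taken naively, it scales like $(k/t_n)^{3l_0}$, which is a higher power of $k/t_n$ than the desired $(k/t_n)^{l_0}$. The fix is standard: under the mild assumption that the time step is small enough that $k/t_n$ stays bounded (which holds for $n\ge 1$ once $k$ is below a data-dependent threshold), $\|e_h^n\|_1^3$ is absorbed into the linear term up to a constant, and can be incorporated into $C'_{l_0}$. Apart from this bookkeeping, the argument is essentially a clean application of Lemma~\ref{lema_need} together with the classical temporal error estimates for backward Euler and BDF2 applied to the semi-discrete Navier--Stokes system.
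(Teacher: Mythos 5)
Your proposal is correct and follows essentially the same route as the paper: the identical decomposition $\widetilde U^n-U_h^n=(\tilde u(t_n)-u_h(t_n))+(e_h^n-\tilde e^n)$ (and its pressure analogue), Lemma~\ref{lema_need} to control $\tilde e^n-e_h^n$, and the temporal error estimates of \cite{jbj_fully} to bound $\|e_h^n\|_1$ and $\|A_he_h^n\|_0$ by $Ck^{l_0}/t_n^{l_0}$. Your handling of the cubic term matches what the paper does implicitly via the interpolation $\|e_h^n\|_1\le C\|e_h^n\|_0^{1/2}\|A_he_h^n\|_0^{1/2}$ together with $k/t_n\le 1$ and $t_n\le T$.
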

\begin{proof}
In \cite[Theorems~5.4 and~5.7]{jbj_fully} we prove that
if~(\ref{tildeM2}) and the case $l=2$ in~(\ref{stokespro}) hold, the
errors $e_h^n$ of these two time integration procedures satisfy for
$k$ small enough that
\begin{equation}
\label{eq:orden1} \|e_h^n\|_0+t_n\|A_he_h^n\|_0 \le {\cal C}_{l_0}
\frac{k^{l_0}}{{t_n}^{l_0-1}},\quad 1\le n\le N,
\end{equation}
for a certain constants~${\cal C}_1$ and~$C_2$, where $l_0=1$ for
the backward Euler method and $l_0=2$ for the two-step BDF. Since
 $\|A_h^{1/2}e_h^n\|_0 \le \|e_h^n\|_0^{1/2}
\|A_h e_h^n\|_0^{1/2}$, and then $\|e_h^n\|_1\le C \|e_h^n\|_0^{1/2}
\|A_h e_h^n\|_0^{1/2}$, from (\ref{eq:orden1}) and
(\ref{error_post_fully})-(\ref{error_post_fully_pre}) we finally
reach that for $k$ small enough
\begin{eqnarray}\label{pri_fully}
\|\tilde e^n-e_h^n\|_j&\le& C'_{l_0}
h^{2-j}\frac{k^{l_0}}{t_n^{l_0}},\ j=0,1,\ 1\le n\le N,
\\
\label{pri_fully_pres} \|\tilde \pi^n-\pi_h^n\|_{L^2/{\Bbb R}}&\le&
C'_{l_0} h\frac{k^{l_0}}{t_n^{l_0}},\ 1\le n\le N,
\end{eqnarray}
where $C'_{l_0}$ is a positive constant.

Let us decompose the estimators as follows:
\begin{eqnarray}\label{decom_prin}
\widetilde U^{n}-U_h^n&=& \left( \widetilde U^{n}-\tilde
u(t_n)\right)+\left(\tilde
u(t_n)-u_h(t_n)\right)+\left(u_h(t_n)-U_h^n\right)\nonumber
\\&=&\left(\tilde u(t_n)-u_h(t_n)\right)+(e_h^n-\tilde e^n),
\\
\label{decom_prinp} \widetilde P^{n}-P_h^n &=& \left( \widetilde
P^{n}-\tilde p(t_n)\right)+\left(\tilde
p(t_n)-p_h(t_n)\right)+\left(p_h(t_n)-P_h^n\right)\nonumber
\\&=&\left(\tilde p(t_n)-p_h(t_n)\right)+(\pi_h^n-\tilde \pi^n),
\end{eqnarray}
which implies
\begin{eqnarray*}
\|\widetilde U^{n}-U_h^n\|_j&\le& \|\tilde
u(t_n)-u_h(t_n)\|_j+\|e_h^n-\tilde e^n\|_j, \qquad j=1,2,
\\
\|\widetilde P^{n}-P_h^n\|_{L^2/{\Bbb R}}&\le& \|\tilde
p(t_n)-p_h(t_n)\|_{L^2/{\Bbb R}}+\|\pi_h^n-\tilde \pi^n\|_{L^2/{\Bbb
R}}.
\end{eqnarray*}
Thus, in view of~(\ref{pri_fully})--(\ref{pri_fully_pres}) we obtain
(\ref{decom_prin2}) and (\ref{decom_prin2p})
\end{proof}

Let us comment on the practical implications of this theorem.
Observe that from (\ref{decom_prin}) and (\ref{decom_prinp}) the
fully discrete estimators $\widetilde U^n-U_h^n$ and $\widetilde
P^n-P_h^n$ can be  both decomposed as the sum of two terms. The
first one is the semi-discrete a posteriori error estimator we have
studied in the previous section (see Remark~\ref{remark31}) and
which we showed it is an asymptotically exact estimator of the
spatial error of $U_h^n$ and $P_h^n$ respectively. On the other
hand, as shown in (\ref{pri_fully})--(\ref{pri_fully_pres}),  the
size of the second term is in asymptotically smaller than the
temporal error of $U_h^n$ and $P_h^n$ respectively.  We conclude
that, as long as the spatial an temporal errors are not too
unbalanced (i.e., they are not of very different sizes), the first
term in~(\ref{decom_prin}) and (\ref{decom_prinp}) is dominant and
then the quantities $\widetilde U^{n}-U_h^n$ and $\widetilde
P^n-P_h^n$ are a posteriori error estimators of the spatial error of
the fully discrete approximations to the velocity and pressure
respectively. The control of the temporal error can be then
accomplished by standard and well-stablished techniques in the field
of numerical integration of ordinary differential equations.

Now, we remark that  $(\widetilde U^n,\widetilde P^n)$ are obviously not computable.  However, we
observe that the fully discrete approximation $(U_h^n,P_h^n)$ of the evolutionary Navier-Stokes
equation is also the approximation to the Stokes problem (\ref{posth0n})-(\ref{posth1n}) whose
solution is $(\widetilde U^{n},\widetilde P^{n})$. Then, one can use any of the available error
estimators for a steady Stokes problem to estimate the quantities $\|\widetilde U^{n}-U_h^n\|_j$
and $\|\widetilde P^{n}-P_h^n\|_{L^2/{\Bbb R}}$, which, as we have already proved, are error
indicators of the spatial errors of the fully discrete approximations to the velocity and pressure,
respectively.

To conclude, we show a procedure to get computable estimates of the error in the fully discrete
approximations. We define the fully discrete postprocessed approximation
$(\widetilde U_h^n,\widetilde P_h^n)$ as the solution of the following Stokes problem (see
\cite{jbj_fully}):
\begin{align}\label{posth0ndis}
     \left(\nabla\widetilde{U}_{h}^n,
 \nabla \tilde{\phi} \right) +\left(\nabla \widetilde{P}_{h}^n,
 \tilde{\phi} \right)
&= \left(f, \tilde{\phi}\right) - b\left(U_{h}^n,U_{h}^n,\tilde{\phi}\right) -
 \left( d_t^*U_{h}^n, \tilde{\phi}\right)
 \quad \forall \, \tilde{\phi} \in \widetilde{X},
  \\
\left( \nabla \cdot \widetilde{U}_{h}^n, \tilde{\psi} \right) &=  0 \quad \forall \, \tilde{\psi} \in
\widetilde{Q}, \label{posth1ndis}
\end{align}
where $(\widetilde{X},\widetilde{Q})$ is as in (\ref{posth0dis})-(\ref{posth1dis}).

Let us denote
by $\tilde e_h^n=\tilde u_h(t_n)-\widetilde U_h^n$ and $\tilde \pi_h^n=\tilde p_h(t_n)-\widetilde
P_h^n$ the temporal errors of the fully discrete postprocessed approximation~$(\tilde U_h^n, \tilde
P_h^n)$ (observe that the semi-discrete postprocessed approximation $(\tilde u_h,\tilde p_h)$ is
defined in (\ref{posth0dis})-(\ref{posth1dis})). Let us denote, as before, by $e_h^n$ the temporal
error of the MFE approximation to the velocity. Then, we have the following bounds.
\begin{lemma}
\label{prop:err_post_fully}
There exists
a positive constant $C=C(\max_{0\le t\le T}\|A_hu_h(t)\|_0)$
such that for $1\le n\le N$ the following bounds hold
\begin{eqnarray}
\qquad\qquad\quad \|\tilde e_h^n-e_h^n\|_j &\le& C h^{2-j}\bigl (
\|e_h^n\|_1+\|e_h^n\|_1^3 + \|A_he_h^n\|_0\bigr ), \ j=0,1,
\label{error_post_fully_vel}
\\
\label{duda2pp}
 \|\tilde \pi_h^n-\pi_h^n\|_{L^2(\Omega)/\mathbb R}
&\le& C h\bigl (
\|e_h^n\|_1+\|e_h^n\|_1^3 + \|A_he_h^n\|_0\bigr ).
\end{eqnarray}
\end{lemma}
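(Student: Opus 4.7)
The proof mirrors that of Lemma~\ref{lema_need}, the key observation being that $(\tilde e_h^n,\tilde \pi_h^n)$ is the MFE approximation on $\widetilde X\times \widetilde Q$ to the \emph{same} continuous Stokes problem~(\ref{otro_stokes})--(\ref{otro_stokes2}) of which $(\tilde e^n,\tilde\pi^n)$ is the exact solution and of which $(e_h^n,\pi_h^n)$ is the MFE approximation on $X_{h,r}\times Q_{h,r-1}$. To see this, I would subtract~(\ref{posth0ndis})--(\ref{posth1ndis}) from~(\ref{posth0dis})--(\ref{posth1dis}) and, using the identity $d_tU_h^n=d_t^*U_h^n$ already exploited in Lemma~\ref{lema_need}, recognize that $(\tilde u_h(t_n)-\widetilde U_h^n,\tilde p_h(t_n)-\widetilde P_h^n)=(\tilde e_h^n,\tilde \pi_h^n)$ satisfies the discrete Galerkin system on $\widetilde X\times \widetilde Q$ driven by exactly the same forcing
$$l=g+(d_t^*U_h^n-\dot u_h(t_n)),\qquad g=F(U_h^n,U_h^n)-F(u_h(t_n),u_h(t_n)),$$
that appeared in the proof of Lemma~\ref{lema_need}.

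Next I would apply the triangle inequality
$$\|\tilde e_h^n-e_h^n\|_j\le \|\tilde e_h^n-\tilde e^n\|_j+\|e_h^n-\tilde e^n\|_j,\qquad j=0,1,$$
and invoke the $L^2$-based Stokes approximation estimate~(\ref{stokespro+1}) for each of the two terms. Both admissible choices, $(\widetilde X,\widetilde Q)=(X_{h',r},Q_{h',r-1})$ with $h'\le h$ and $(\widetilde X,\widetilde Q)=(X_{h,r+1},Q_{h,r})$, are LBB-stable pairs for which~(\ref{stokespro+1}) holds with an $O(h^{2-j})$ factor (in the finer-grid case one uses $(h')^{2-j}\le h^{2-j}$), so each summand is bounded by $Ch^{2-j}\|l\|_0$. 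The pressure estimate is produced in the same way, replacing~(\ref{stokespro+1}) by~(\ref{stokespre}) combined with the $H^1/\mathbb{R}$-regularity bound~(\ref{preHR}) applied to the Stokes problem~(\ref{otro_stokes})--(\ref{otro_stokes2}); this yields $\|\tilde\pi_h^n-\pi_h^n\|_{L^2/\mathbb{R}}\le Ch\|l\|_0$.

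It only remains to control $\|l\|_0$, and this is carried out exactly as in Lemma~\ref{lema_need}. From definition~(\ref{dtu}) together with~(\ref{ten})--(\ref{ten2}) one finds
$$d_t^*U_h^n-\dot u_h(t_n)=A_he_h^n-\Pi_h\bigl(F(U_h^n,U_h^n)-F(u_h(t_n),u_h(t_n))\bigr),$$
so that $\|l\|_0\le \|A_he_h^n\|_0+2\|g\|_0$. Splitting $g=F(e_h^n,u_h(t_n))+F(u_h(t_n),e_h^n)-F(e_h^n,e_h^n)$ and applying~(\ref{eq:adelanto0})--(\ref{eq:adelanto1}) in conjunction with the uniform bound~(\ref{A_h_uh}) gives
$$\|g\|_0\le C\bigl(\|A_hu_h(t_n)\|_0\,\|e_h^n\|_1+\|e_h^n\|_1^{3/2}\|A_he_h^n\|_0^{1/2}\bigr),$$
and a Young-type inequality absorbs the mixed term into $\|e_h^n\|_1^3+\|A_he_h^n\|_0$. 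Combining this with the two Stokes bounds from the preceding paragraph delivers (\ref{error_post_fully_vel})--(\ref{duda2pp}).

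The only conceptual step is the first identification: once one notices that $\tilde e_h^n$ and $e_h^n$ are MFE approximations, on two different (but both LBB-stable) spaces, to the very same continuous Stokes problem, the lemma reduces to two routine applications of the $L^2$-Stokes estimate plus the nonlinear/temporal bookkeeping of $\|l\|_0$ already performed in Lemma~\ref{lema_need}. No new ingredients beyond Lemma~\ref{lema_need} and~(\ref{stokespro+1}) are needed.
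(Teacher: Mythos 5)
Your proposal is correct and follows essentially the same route as the paper: the paper bounds the pressure term exactly as you do (triangle inequality through $\tilde\pi^n$, using Lemma~\ref{lema_need} for one piece and the observation that $\tilde\pi_h^n$ is the MFE approximation in $\widetilde Q$ to $\tilde\pi^n$ for the other), and for the velocity it simply cites \cite[Proposition 3.1]{jbj_fully}, whose argument is the same triangle-inequality-plus-(\ref{stokespro+1}) scheme you spell out. The identification of $(\tilde e_h^n,\tilde\pi_h^n)$ as the Galerkin approximation on $\widetilde X\times\widetilde Q$ to problem~(\ref{otro_stokes})--(\ref{otro_stokes2}) and the bookkeeping of $\|l\|_0$ match the paper's reasoning.
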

\begin{proof}
The bound (\ref{error_post_fully_vel}) is proved in \cite[Proposition 3.1]{jbj_fully}.
To prove (\ref{duda2pp}) we decompose
$$
\|\tilde \pi_h^n-\pi_h^n\|_{L^2(\Omega)/\mathbb R}\le \|\tilde \pi_h^n-\tilde
\pi^n\|_{L^2(\Omega)/\mathbb R} +\|\tilde \pi^n-\pi_h^n\|_{L^2(\Omega)/\mathbb R}.
$$
The second term above is bounded in (\ref{error_post_fully_pre}) of Lemma~\ref{lema_need}. For the
first we observe that $\tilde \pi_h^n$ is the MFE approximation in $\widetilde Q$ to the pressure
$\tilde \pi^n$ in (\ref{otro_stokes})-(\ref{otro_stokes2}) so that the same reasoning used in the
proof of  Lemma~\ref{lema_need} allow us to obtain
$$
\|\tilde \pi_h^n-\tilde \pi^n\|_{L^2(\Omega)/\mathbb R}\le C  h\|\tilde \pi^n\|_{H^1/{\Bbb R}}\le C
h\bigl ( \|e_h^n\|_1+\|e_h^n\|_1^3 + \|A_he_h^n\|_0\bigr ).
$$
\end{proof}

Using (\ref{eq:orden1}) as before, we get the analogous to (\ref{pri_fully}) and (\ref{pri_fully_pres}), i.e.,  for $k$ small enough
the following bound holds
\begin{eqnarray}\label{pri_fully_dis}
\|\tilde e_h^n-e_h^n\|_j&\le& C'_{l_0} h^{2-j}\frac{k^{l_0}}{t_n^{l_0}},\ j=0,1,\ 1\le n\le N,\\
\label{pri_fully_dis_pre}
\|\tilde \pi_h^n-\pi_h^n\|_{L^2/{\Bbb R}}&\le& C'_{l_0} h\frac{k^{l_0}}{t_n^{l_0}},\ 1\le n\le N.
\end{eqnarray}
where $C'_{l_0}$ is a positive constant.

Similarly to~(\ref{decom_prin})--(\ref{decom_prinp}) we write
$\widetilde U_h^{n}-U_h^n=\bigl(\tilde u_h(t_n)-u_h(t_n)\bigr)+
\bigl(e_h^n-\tilde e_h^n\bigr)$
and $\widetilde P_h^{n}-P_h^n=\bigl(\tilde p_h(t_n)-p_h(t_n)\bigr)+
\bigl(\pi_h^n-\tilde \pi_h^n\bigr)$,
so that in view of~(\ref{pri_fully_dis})--(\ref{pri_fully_dis_pre})
we have the following result.

\begin{theorem}\label{th_esti_fully_d}
Let $(u,p)$ be the solution of (\ref{onetwo})--(\ref{ic}) and let
(\ref{tildeM2}) hold. Assume that the fully discrete MFE
approximations $(U_h^n,P_h^n)$, $n=0\ldots,N=T/k$ are obtained by
the backward Euler method or the two-step BDF
(\ref{tend})--(\ref{tend2}), and let $(\widetilde U_h^n,\widetilde
P_h^n)$ be the solution of~(\ref{posth0ndis})--(\ref{posth1ndis}).
Then, for $n=1,\ldots,N$,
\begin{eqnarray}\label{decom_prin2_dis}
\|\widetilde U_h^{n}-U_h^n\|_j&\le& \|\tilde u_h(t_n)-u_h(t_n)\|_j+
C'_{l_0} h^{2-j}\frac{k^{l_0}}{t_n^{l_0}},\qquad j=0,1,
\\
\label{decom_prin2_disp}
\|\widetilde P_h^{n}-P_h^n\|_{L^2/{\Bbb R}}&\le& \|\tilde p_h(t_n)-p_h(t_n)\|_{L^2/{\Bbb
R}}+C'_{l_0}
h\frac{k^{l_0}}{t_n^{l_0}},
\end{eqnarray}
where $C'_{l_0}$ is the constant in~(\ref{pri_fully_dis})--(\ref{pri_fully_dis_pre}),
$l_0=1$ for the backward Euler method and $l_0=2$ for the two-step BDF.
\end{theorem}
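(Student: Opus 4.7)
The plan is to mirror the proof of Theorem~\ref{th_esti_time_d}, simply substituting the fully discrete postprocessed quantities $(\tilde u_h,\tilde p_h,\widetilde U_h^n,\widetilde P_h^n)$ for the semi-discrete and time-discrete ones used there. The starting point is the decomposition already recorded immediately above the statement: by inserting and removing $\tilde u_h(t_n)$ and $u_h(t_n)$,
$$
\widetilde U_h^n-U_h^n
=\bigl(\widetilde U_h^n-\tilde u_h(t_n)\bigr)+\bigl(\tilde u_h(t_n)-u_h(t_n)\bigr)+\bigl(u_h(t_n)-U_h^n\bigr)
=\bigl(\tilde u_h(t_n)-u_h(t_n)\bigr)+\bigl(e_h^n-\tilde e_h^n\bigr),
$$
and analogously $\widetilde P_h^n-P_h^n=\bigl(\tilde p_h(t_n)-p_h(t_n)\bigr)+\bigl(\pi_h^n-\tilde\pi_h^n\bigr)$. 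Taking norms and using the triangle inequality would reduce the theorem to the estimation of $\|e_h^n-\tilde e_h^n\|_j$ and $\|\pi_h^n-\tilde\pi_h^n\|_{L^2/\mathbb R}$.

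These two terms are controlled precisely by the bounds (\ref{pri_fully_dis})--(\ref{pri_fully_dis_pre}). Substituting $\|e_h^n-\tilde e_h^n\|_j\le C'_{l_0}h^{2-j}k^{l_0}/t_n^{l_0}$ and $\|\pi_h^n-\tilde\pi_h^n\|_{L^2/\mathbb R}\le C'_{l_0}h\,k^{l_0}/t_n^{l_0}$ into the triangle inequalities yields (\ref{decom_prin2_dis}) and (\ref{decom_prin2_disp}), completing the proof.

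There is essentially no real obstacle at this stage because the substantive work has already been carried out in Lemma~\ref{prop:err_post_fully} and the derivation of (\ref{pri_fully_dis})--(\ref{pri_fully_dis_pre}). The main technical point, which I would just recall for completeness, is the way the cubic nonlinear contribution $\|e_h^n\|_1^3$ appearing in Lemma~\ref{prop:err_post_fully} is absorbed: the fully discrete temporal bound (\ref{eq:orden1}) together with the interpolation $\|e_h^n\|_1\le C\|e_h^n\|_0^{1/2}\|A_he_h^n\|_0^{1/2}$ gives $\|e_h^n\|_1=O(k^{l_0}/t_n^{l_0-1/2})$ and $\|A_he_h^n\|_0=O(k^{l_0}/t_n^{l_0})$, so that for $k$ small enough the cubic term is dominated by the linear one and the overall right-hand side of Lemma~\ref{prop:err_post_fully} is of order $k^{l_0}/t_n^{l_0}$ up to the appropriate power of $h$. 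Thus the proof is in the end a short bookkeeping argument built on top of the already established identities and estimates.
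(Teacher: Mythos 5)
Your proof is correct and follows exactly the route the paper takes: the decomposition $\widetilde U_h^n-U_h^n=(\tilde u_h(t_n)-u_h(t_n))+(e_h^n-\tilde e_h^n)$ (and its pressure analogue), the triangle inequality, and the bounds (\ref{pri_fully_dis})--(\ref{pri_fully_dis_pre}) obtained from Lemma~\ref{prop:err_post_fully} together with (\ref{eq:orden1}). Your remark on absorbing the cubic term $\|e_h^n\|_1^3$ via the interpolation $\|e_h^n\|_1\le C\|e_h^n\|_0^{1/2}\|A_he_h^n\|_0^{1/2}$ is precisely the argument the paper uses when deriving the analogous bounds (\ref{pri_fully})--(\ref{pri_fully_pres}).
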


The practical implications of this result are similar to~those
of~Theorem~\ref{th_esti_time_d}, that is,
the first term on the right-hand side of~(\ref{decom_prin2_dis})
is an error indicator of the
spatial error (see Theorem~\ref{th_pos_esti}) while the second one is
asymptotically smaller than the temporal error. As a consequence,
the quantity $(\widetilde U_h^n-U_h^n)$ is a computable estimator of
the spatial error of the fully discrete velocity $U_h^n$ whenever
the temporal and spatial errors of $U_h^n$ are more or less of
the same size. As before, similar arguments apply for the pressure.
We  remark that having balanced spatial and temporal errors in the fully discrete approximation is the more common case in  practical computations since
one usually looks for a final solution with small total error.

As in the semi-discrete case, the advantage of these error estimators is that
they produce  enhanced (in space) approximations when
they are added to the Galerkin MFE approximations.

\section{Numerical experiments}
We consider the equations
\begin{eqnarray}
\label{onetwo_nu}
u_t -\nu\Delta u + (u\cdot\nabla)u + \nabla p &=& f,\\
\bet
{\rm div}(u)&=&0,\nonumber
\end{eqnarray}
in the domain $\Omega=[0,1]\times[0,1]$ subject to homogeneous Dirichlet boundary conditions. For the
numerical experiments of this section we approximate the equations using the mini-element \cite{Brezzi-Fortin91} over
a regular triangulation of $\Omega$ induced by the set of nodes $(i/N,j/N)$, $0\le i,j\le N$, where $N=1/h$ is an
integer. For the time integration we use the two-step BDF method with fixed time step. For the first step we apply the backward
Euler method.
In the first numerical experiment we study the semi-discrete in space case. To this end
in the numerical experiments we integrate in time with a time-step small enough in order to have negligible temporal
errors. We take the forcing term $f(t,x)$ such that the solution of (\ref{onetwo_nu}) with $\nu=0.05$ is
\begin{eqnarray}\label{solu_fix}
u^1(x,y,t)&=&2\pi \varphi(t)  \sin^2(\pi x)\sin(\pi y)\cos(\pi y),\nonumber\\
u^2(x,y,t)&=&-2\pi \varphi(t)\sin^2(\pi y)\sin(\pi x)\cos(\pi x),\\
p(x,y,t)&=&20 \varphi(t) x^2 y\nonumber.
\end{eqnarray}
We chose $\varphi(t)=t$ in the first numerical experiment.

When using the mini-element it has been observed and reported in the
literature (see for instance \cite{Verfurth1}, \cite{VerfurthSIAM}, \cite{Bank-Welfert2} \cite{Kim},
\cite{Pier1} and \cite{Pier2}) that the linear part of the approximation to the velocity, $u_h^l$,
is a better approximation to the solution $u$ than $u_h$ itself. The bubble part of the approximation
is only introduced for stability reasons and does not improve the approximation to the velocity
and pressure terms. For this reason in the numerical experiments of this section we only consider
the errors in the linear approximation to the velocity. Also, following \cite{bbj}, we postprocess
only the linear approximation to the velocity, i.e., we solve the Stokes problem (\ref{posth0dis})-(\ref{posth1dis})
with $u_h^l$ and $\dot u_h^l$ on the right-hand-side instead of $u_h$ and $\dot u_h$. The finite element space
at the postprocessed step is the same mini-element defined over a refined mesh of size $h'$. We show the Galerkin
errors and the a posteriori error estimates obtained at time $t^*=0.5$ by taking the difference between the
postprocessed and the standard approximations to the velocity and the pressure. In Figure~\ref{semi}, we
\begin{figure}[h]
\includegraphics[width=6cm]{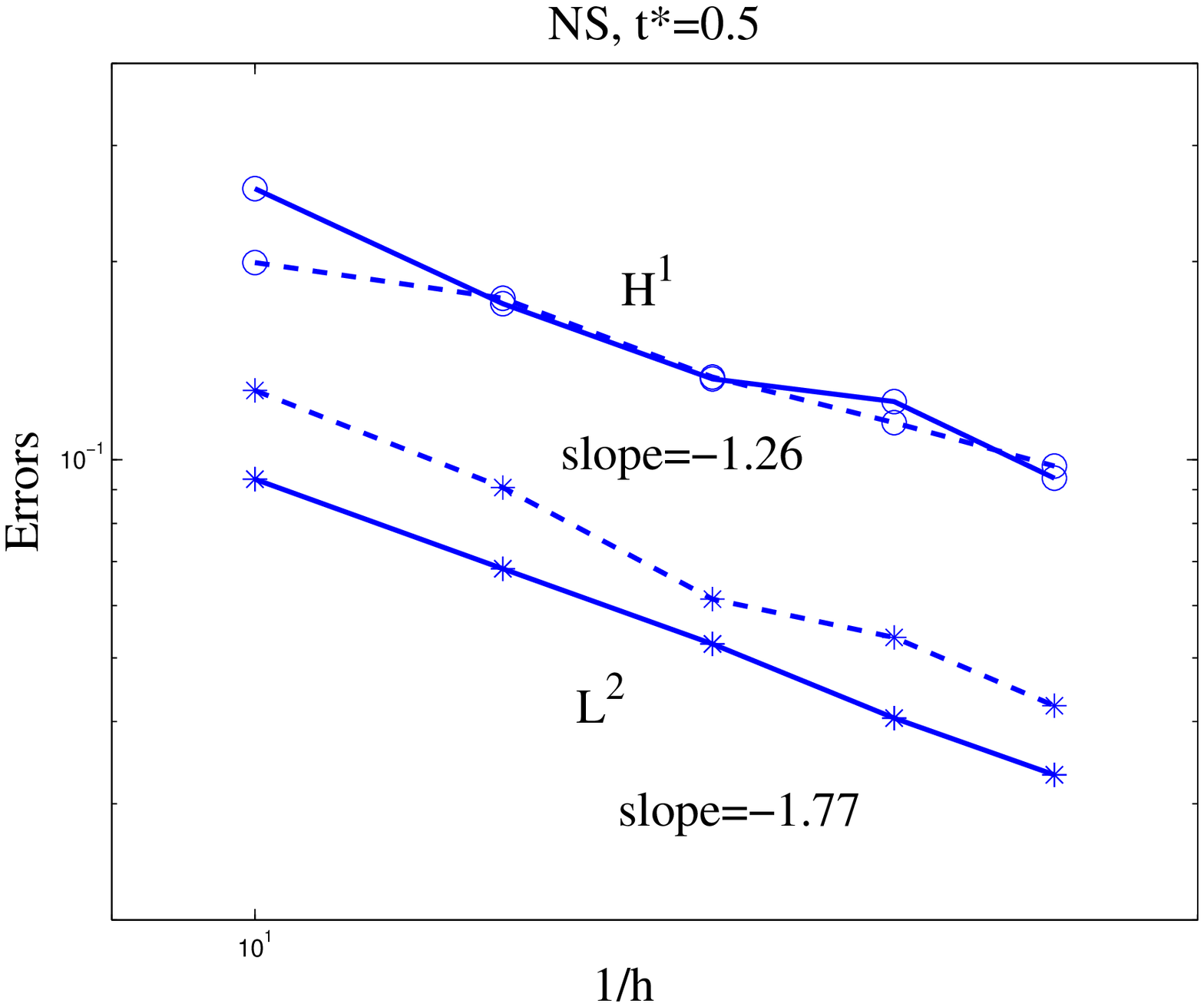}
\hspace{-0.5cm} 
\mbox{} \hfill
\includegraphics[width=6cm]{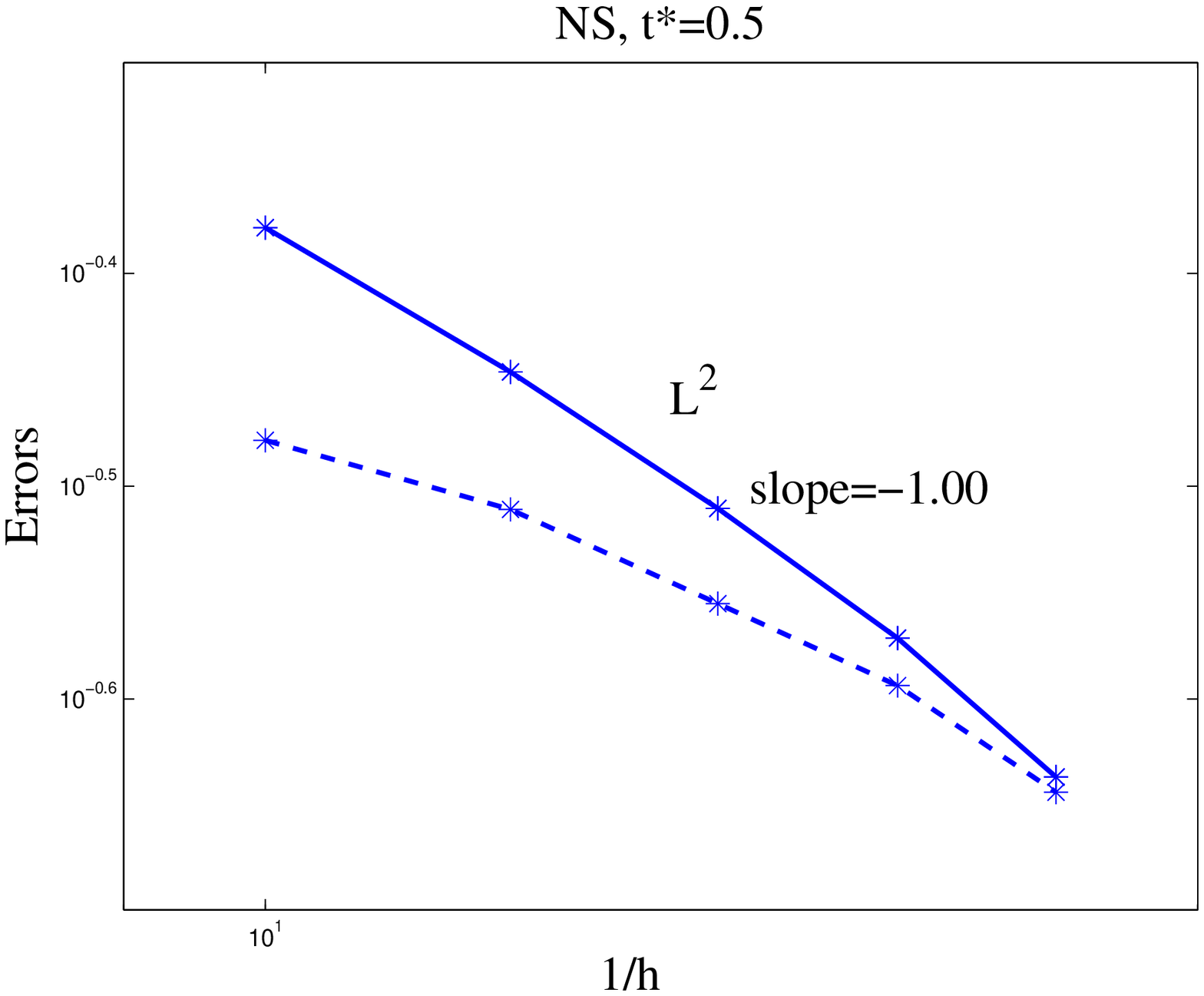}
\caption{Errors (solid lines) and estimations (dashed lines)
in $L^2$ (asterisks) and $H^1$ (circles) for $h=1/10$, $1/12$, $1/14$, $1/16$ and $1/18$ and
$h'=1/24$, $1/30$, $1/34$, $1/38$ and $1/40$ respectively. On the left, error estimations for the first
component of the velocity. On the right, error estimations for the pressure.}\label{semi}
\end{figure}
have represented the errors in the first component of the velocity of the Galerkin approximation in the $L^2$ and $H^1$ norms and the errors
for the pressure in the $L^2$ norm
using solid lines. We have used dashed lines to represent the error estimations. The results for the second component
of the velocity are completely analogous and they are not reported here. The $L^2$ errors of the pressure, on the
right of Figure~\ref{semi}, are approximately twice as those of the $H^1$ errors of the velocity, on the left of Figure~\ref{semi}, in this example.
We can observe that with the procedure we propose in this paper we get very accurate estimations of the errors, specially in
the $H^1$ norm of the velocity. The difference between the behavior of the error estimations in the $L^2$ and $H^1$ norms of the velocity are
due to the fact that for first order approximations the postprocessed procedure increases the rate
of convergence of the standard method only in the $H^1$ norm for the velocity and the $L^2$ norm for the pressure. However, since
the postprocessed method  produces smaller errors than the Galerkin method also in the $L^2$ norm it can also be used to estimate
the errors in this norm, as it can be checked in the experiment. On the right of Figure~\ref{semi} we can clearly observe the asymptotically
exact behavior of the estimator  in the $L^2$ errors in the pressure in agreement with (\ref{asin_posdis}) of Theorem~\ref{th_pos_esti}.

Let us denote by
$$
\theta_{\rm vel}=\frac{\tilde u_h^1(t^*)-u_h^1(t^*)}{u^1(t^*)-u_h^1(t^*)},\quad \theta_{\rm pre}=\frac{\tilde p_h(t^*)-p(t^*)}{p(t^*)-p_h(t^*)},
$$
the efficiency indexes for the first component of the velocity and for the pressure. In Table~\ref{tabla1} we have represented the values
of the $L^2$ and $H^1$ norms of the velocity index and the $L^2/{\Bbb R}$ norm of the pressure index for the experiments in Figure~\ref{semi}.
\begin{table}[h]
\begin{center}
\begin{tabular}{|c|c|c|c|}
\hline
 $h$& $\|\theta_{\rm vel}\|_0$&$\|\theta_{\rm vel}\|_1$& $\|\theta_{\rm pre}\|_{L^2/{\Bbb R}}$\\
\hline
1/10&$1.3640$  & $0.7721$&$1.2588$\\
 \hline
1/12&$1.3280$& $1.0197$&$1.1602$ \\
 \hline
1/14&$1.1695$ &$1.0068$&$1.1084$ \\
 \hline
1/16&$1.3259$ &$0.9290$&$1.0526$ \\
\hline
1/18&$1.2741$ &$1.0438$&$1.0167$\\
\hline
\end{tabular}
\end{center} \caption{Efficiency indexes}\label{tabla1}
\end{table}
We deduce again from the values of the efficiency indexes that the a posteriori error estimates are very accurate, all the values are remarkably
 close to $1$, which is the optimal value for the efficiency index. More precisely, we can observe that the values of the efficiency index in the $L^2$ norm for the
velocity in this experiment belong to the interval $[1.1695,1.3640]$. The values in the $H^1$ norm for the
velocity lie on the interval $[0.7721,1.0438]$ and, finally, the values for the pressure are in the interval
$[1.0167,1.2588]$.

To conclude, we show a numerical experiment to check the behavior of the estimators in the fully discrete case. We choose the forcing term $f$
such that the solution of (\ref{onetwo_nu}) is (\ref{solu_fix}) with $\varphi(t)=\sin((2\pi+\pi/2)t)$. The value of $\nu=0.05$ and
the final time $t^*=0.5$ are the same as before.
\begin{figure}[h]
\hspace{-0.5cm} 
\includegraphics[width=6cm]{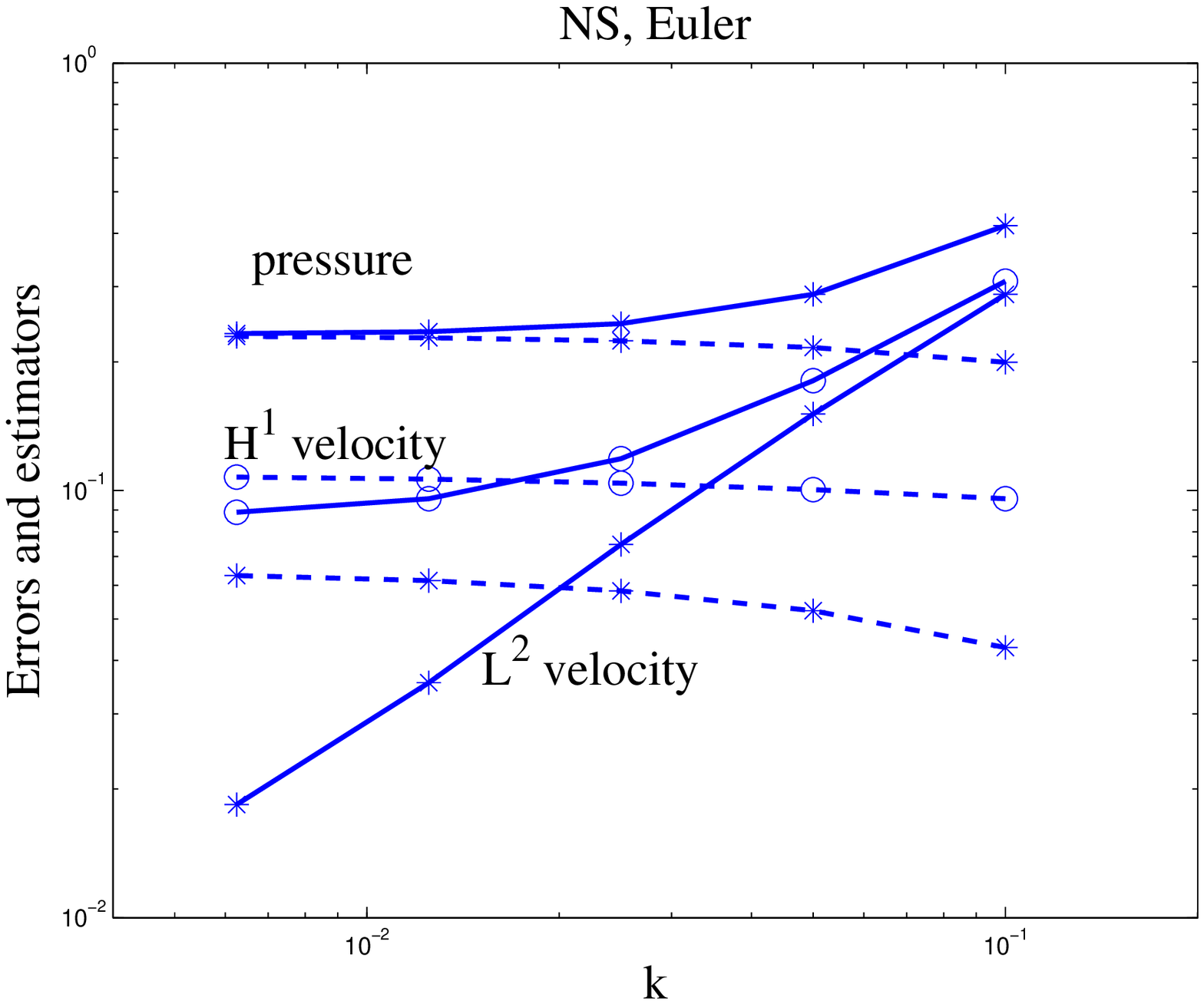} 
\mbox{} \hfill
\includegraphics[width=6cm]{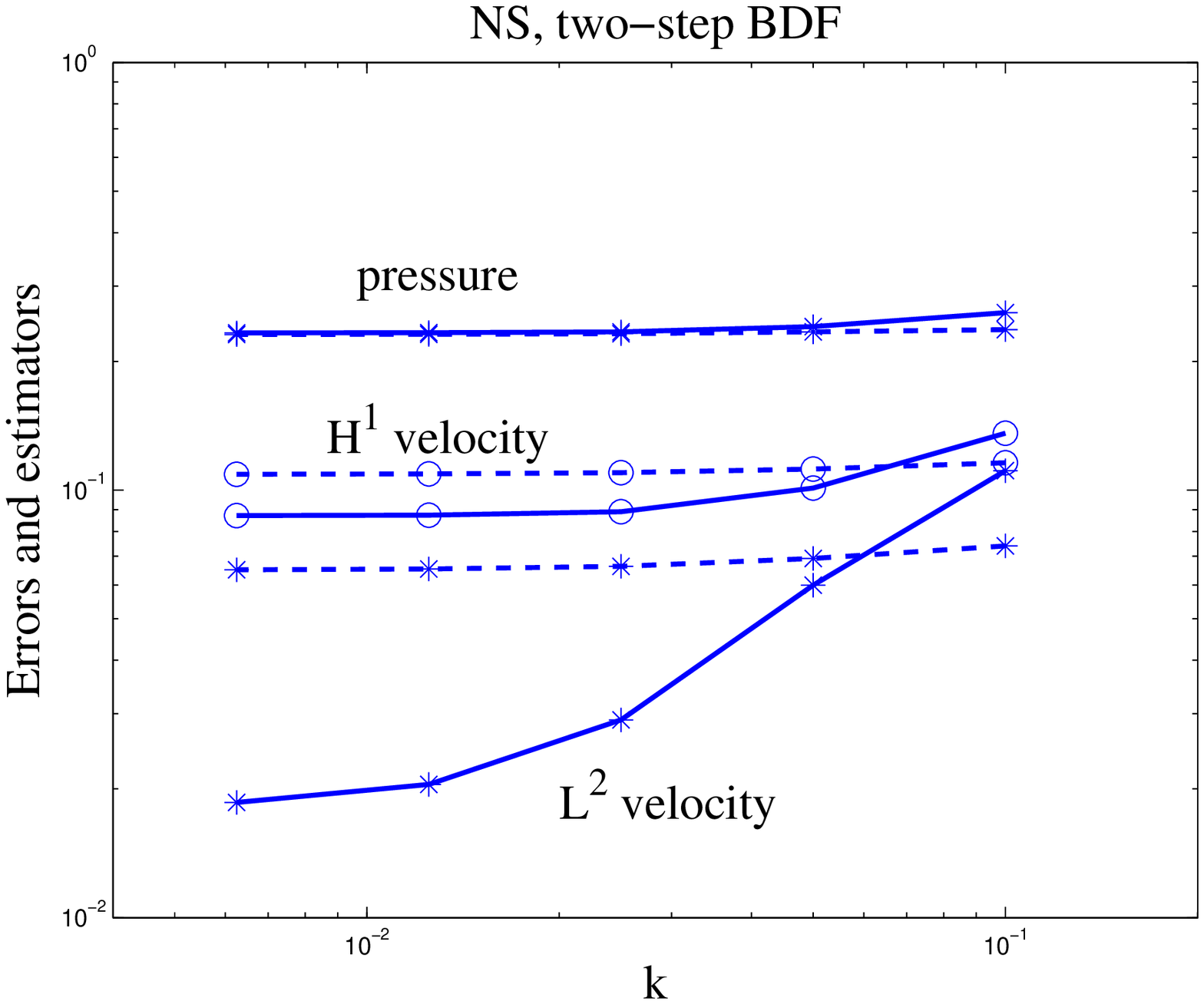}
\caption{Errors (solid lines) and
estimations (dashed lines) in $L^2$ (asterisks) and $H^1$ (circles)
for $h=1/18$. On the left: Euler; on the right: two-step BDF for
$k=1/10$ to $k=1/160$.}\label{ulti}
\end{figure}
In Figure~\ref{ulti}, on the left, we have represented the errors obtained using the implicit
Euler method as a time integrator for different values of the fixed time step $k$ ranging from $k=1/10$ to $k=1/160$ halving each time the value of $k$.
For the spacial discretization we use the mini-element with always the same value of $h=1/18$. We
use solid lines for the errors in the Galerkin method and dashed lines for the estimations, as before. The $L^2$ norm errors are marked with asterisks while
the $H^1$ norm errors are marked with circles. We estimate the errors using the postprocessed method computed with the same mini-element
over a refined mesh of size $h'=1/40$. We observe that the Galerkin errors decrease as $k$ decreases until a value that corresponds to the spatial error of the approximation. On the contrary, the error estimations lie on an almost horizontal line, both for the velocity in the $L^2$ and
$H^1$ norms and for the pressure. This means, as we stated in Section 4.2,
that the error estimations we propose are a measure
of the spatial errors, even when the errors in the Galerkin method are polluted by errors coming from the temporal discretization. In this experiment
the error estimations are very accurate for the spatial errors of the velocity in the $H^1$ norm and for the errors in the pressure.
As commented above, the fact that postprocessing linear elements does not increase the convergence rate
 in the $L^2$ norm is reflected in the precision of the error estimations in the $L^2$ norm. On the right of Figure~\ref{ulti} we have represented the
errors obtained when we integrate in time with the two-step BDF and
fixed time step. The only remarkable difference is that, as we
expected from the second order rate of convergence of the method in
time, the temporal errors are smaller for the same values of the
fixed time step $k$. Again, the estimations lie on a horizontal line
being essentially the same as in the experiment on the left.

\end{document}